\newtheorem{theorem}{Theorem}
\newtheorem{proposition}[theorem]{Proposition}
\newtheorem{lemma}[theorem]{Lemma}
\newtheorem{corollary}[theorem]{Corollary}
\theoremstyle{definition}
\newtheorem{remark}{Remark}
\newtheorem{example}{Example}
\newcommand{\R}{\mathbb{R}}
\newcommand{\N}{\mathbb{N}}
\newcommand{\mZ}{\mathcal{Z}}
\newcommand{\mS}{\mathcal{S}}
\newcommand{\mR}{\mathcal{R}}
\newcommand{\mX}{\mathcal{X}}
\newcommand{\mN}{\mathcal{N}}
\newcommand{\mE}{\mathcal{E}}
\newcommand{\mG}{\mathcal{G}}
\newcommand{\mC}{\mathcal{C}}
\renewcommand{\k}{\kappa}
\newcommand{\norm}[2][\relax]{\ifx#1\relax \ensuremath{\left\Vert#2\right\Vert} \else \ensuremath{\left\Vert#2\right\Vert_{#1}}\fi}
\begin{document}

\title{Quasi-steady state and singular perturbation reduction\\ for  reaction networks with non-interacting species}

\author{Elisenda Feliu\\Department of Mathematical Sciences, University of Copenhagen\\Universitetsparken 5, 2100
Copenhagen, Denmark\\ \tt{efeliu@math.ku.dk}\\
\\
Christian Lax\\
Lehrstuhl A f\"ur Mathematik, RWTH Aachen\\
52056 Aachen, Germany\\
\tt{christian.lax@matha.rwth-aachen.de}\\
\\
Sebastian Walcher\\
Lehrstuhl A f\"ur Mathematik, RWTH Aachen\\
52056 Aachen, Germany\\
\tt{walcher@matha.rwth-aachen.de}\\
\\
Carsten Wiuf\\Department of Mathematical Sciences, University of Copenhagen\\Universitetsparken 5, 2100
Copenhagen, Denmark\\ \tt{wiuf@math.ku.dk}\\
}

%\date{}

\maketitle
\begin{abstract}Quasi-steady state (QSS) reduction is a commonly used method to lower the dimension of a differential equation model of a chemical  reaction network. From a mathematical perspective, QSS reduction is generally interpreted as a special type of singular perturbation reduction, but in many instances the correspondence is not worked out rigorously, and the QSS reduction may yield incorrect results. The present paper contains a thorough discussion of QSS reduction and its relation to singular perturbation reduction for the special, but important, case when the right hand side of the differential equation is linear in the variables to be eliminated. For this class we give necessary and sufficient conditions for a singular perturbation reduction (in the sense of Tikhonov and Fenichel) to exist, and to agree with QSS reduction. We then apply the general results to chemical reaction networks with non-interacting species, generalizing earlier results and methods for steady states to quasi-steady state scenarios. We provide easy-to-check graphical conditions to select parameter values yielding to singular perturbation reductions and additionally, we identify a choice of parameters for which the corresponding singular perturbation reduction agrees with the QSS reduction. Finally we consider a number of examples.

\medskip
\noindent {\bf MSC (2010):} 92C45, 34E15, 80A30, 13P10  \\
{\bf Key words}: Reaction networks, dimension reduction, non-interacting sets, linear elimination, invariant sets, critical manifold.
\end{abstract}

\section{Introduction}

Mathematical modelling of chemical reaction networks naturally yields a large class of (typically) polynomial ordinary differential equations (ODEs). Beyond chemistry and biochemistry, this class of differential equations is also useful in applications to ecology, epidemiology and genetics. These ODE systems often contain many variables (e.g.\ concentrations of chemical species) and parameters e.g.\ rate constants), and moreover the parameters may not be exactly identifiable by experiments. Therefore a general study of systems with undetermined parameters is appropriate, and possible reduction of such systems to smaller dimension is particularly relevant.

Quasi-steady state (QSS) reduction is commonly used  in (bio-)chemistry and related fields, but rarely for differential equations in other areas of application. The method was introduced, for a model of an enzyme-catalyzed reaction, by Michaelis and Menten in 1917 and (based on different assumptions)  by Briggs and Haldane \cite{briggshaldane} in 1925. The reasoning of these authors used intuition about the specifics and parameters of such reactions. In contrast, the insight that singular perturbation theory (based on Tikhonov's 1952 paper \cite{tikh}) can explain quasi-steady state phenomena is due to Heineken et al. \cite{hta} in 1967, and the analysis of the basic Michelis-Menten reduction was brought to a conclusion by Segel and Slemrod \cite{SSl} in 1989. Nowadays QSS is widely seen as  a special type of a singular perturbation scenario in the sense of Tikhonov and Fenichel \cite{fenichel}, although many authors still use QSS reduction without verifying the necessary conditions for singular perturbation reduction.  For the reader's convenience, we give a brief outline of singular perturbation reduction (including a coordinate-free version) in the Appendix.

A general mathematical study of quasi-steady state reduction, including consistency and validity requirements, and establishing agreement of QSS and singular perturbation reduction under rather restrictive conditions, was carried out in \cite{gwz3}. In the present paper we continue this study for a special (but quite relevant) class of differential equations. Building on   \cite{gw2,gwz3} we study ODE systems that depend linearly on the variables to be eliminated. To compute the singular perturbation reduction, we use the coordinate-free approach introduced in \cite{gw2} (see also Appendix 4 in \cite{gwz3}). We thus determine explicitly the two types of reduction for this general class of systems, and then obtain necessary and sufficient conditions for their agreement (up to higher order terms).

The motivation for studying this class of differential equations comes from reaction networks where the correct form of the ODE system can be identified by means of a set of \emph{non-interacting} species (under the assumption of mass-action kinetics), as introduced in \cite{feliu:intermediates,fwptm,saez_reduction}. A set of non-interacting species (variables) can be inferred from the reactions of the network alone without scrutinizing the analytical form of the ODE system, and will appear linearly in the differential equations.

In the setting of non-interacting species, we give general necessary and sufficient criteria for the existence of a Tikhonov-Fenichel reduction, and furthermore we provide necessary and sufficient criteria for the Tikhonov-Fenichel reduction to agree with the QSS reduction. The linear structure of the ODE system (in the variables to be eliminated)  provides easy to check sufficient \emph{graphical} criteria. For smaller reaction networks, the graph and the criteria can easily be constructed and checked by hand, providing criteria that are readily usable by application-oriented scientists.

We end the paper with a number of examples to illustrate the usefulness and limits  of the graphical approach.  In particular we study so-called post-translational modification (PTM) systems of which the classical Michaelis-Menten system is a special case, and further examples from the chemical and ecological literature.

%%%%%%%%%%%%%%%%%%%%%%%%%%%%%%%%%%%%%%%%%%%%%%%%%%%%%%%%%%%
\section{Linear elimination and reduction}

\subsection{Motivation and background: Reaction networks}\label{motivation}
We consider a system of ordinary differential equations (ODEs) arising from a   reaction network. A reaction network (or network for short) with species set $\mS=\{X_1,\ldots,X_n\}$ consists of a set of reactions $\mR=\{r_1,\ldots,r_m\}$, such that the $i$-th reaction takes the form
\begin{equation}\label{eq:network}
 \sum_{j=1}^n \gamma_{ij} X_j \ce{->} \sum_{j=1}^n \gamma'_{ij} X_j,
\end{equation}
where $\gamma_{ij},\gamma'_{ij}\in\N_0$, are non-negative integers. The left hand side is called the \textit{reactant} of the reaction, the right hand side, the \textit{product}, and jointly they are \textit{complexes}. We will assume that any species takes part in at least one reaction and that the reactant and product sides are never identical, that is, it cannot be that $\gamma_{ij}=\gamma'_{ij}$ for all $j=1,\ldots,n$.

A reaction network gives rise to an ODE system of the form
\begin{equation}\label{eq:crneq}
\dot{y} =N v(y), \qquad y\in \R^n_{\geq 0},
\end{equation}
where $\dot{y}$ denotes derivative with respect to time $t$, $N$ is the stoichiometric matrix, that is, the $i$-th column of $N$ is the vector with $j$-th entry $\gamma'_{ij}-\gamma_{ij}$, and $v(y)$ is a vector of \emph{rate functions} defined on an open neighborhood of $\R^m_{\geq 0}$ and  {\em non-negative} on $\R^m_{\geq 0}$.  
 Under the additional assumption that $v_i(y)$ vanishes whenever $y_j=0$ and $\gamma'_{ij}<\gamma_{ij}$, that is, whenever $X_j$ is consumed by the reaction,  the non-negative orthant $\R^n_{\geq 0}$ as well as the positive orthant $\R^n_{>0}$ are forward invariant by the trajectories of the system \cite{Sontag:2001}.   Of particular interest is mass-action kinetics  with
$$v=(v_1,\ldots,v_m), \quad v_i(y)=\kappa_i \prod_{j=1}^n y_j^{\gamma_{ij}},\quad  \kappa_i> 0,$$
where $\kappa_i$ is the (non-negative) reaction rate constant. Formally, the borderline case $\kappa_i=0$ corresponds to removing a reaction, and we are also interested in such scenarios. 

In \cite{feliu:intermediates,fwptm,saez_reduction}, a reduction procedure was introduced for the computation and discussion of  steady states of an ODE system \eqref{eq:crneq}. It centers around the algebraic elimination at steady state of  variables representing the concentrations of so-called \textit{non-interacting species}:  Let $\mZ=\{Z_1,\ldots,Z_P\}\subseteq \mS$ be a subset of the species set and let $\mX=\mS\setminus \mZ=\{X_1,\ldots,X_n\}$ be the complementary subset (where the species potentially are relabelled compared to \eqref{eq:network}). If, after writing the reactions as
\begin{equation}\label{eq:reactions}
 \sum_{j=1}^n \beta_{ij} X_j+\sum_{j=1}^P \delta_{ij} Z_{j}\ce{->} \sum_{j=1}^n \beta'_{ij} X_j+\sum_{j=1}^P \delta'_{ij} Z_j, \quad  i=1,\ldots,m, 
\end{equation}
the conditions
$$\sum_{j=1}^P \delta_{ij}\le 1,\quad \sum_{j=1}^P \delta'_{ij}\le 1$$
are satisfied, then the set $\mZ$ is said to be  \emph{non-interacting} and its elements are called \emph{non-interacting species}. Intuitively, two or more  species are non-interacting if they are never found together in the same complex.  This implies for mass-action kinetics that the variables corresponding to non-interacting species appear linearly in the ODE system. 
The vector of concentrations of the non-interacting species and the remaining species are denoted by $z=(z_1,\ldots,z_P)$ and $x=(x_1,\ldots,x_n)$, respectively. Hence in this terminology $y=(x,z)\in \R^{n+P}$.  It was shown in  \cite{fwptm} that  one may parameterize $z_1,\ldots, z_P$ by  $x$ at steady state, given suitable regularity conditions on the Jacobian matrix and assuming the rate functions are linear in $z$. In the present paper we will extend this reduction procedure to the case of quasi-steady state reduction.

%%%%%%%%%%%%%%%%%%%%%%%%%%%%%%%%%%%%%%%%%%
\subsection{{The general setting}}\label{gensetting}

We first discuss the reduction procedures in a context that is not restricted to  reaction networks, just keeping the characteristic property of reaction equations with non-interacting species. This class of equations, and their reductions, may be of interest beyond  reaction networks. 

We consider a parameter dependent ODE system that is linear in $z$, that is,
\begin{equation}\label{generalsystem}
\begin{array}{rcccl}
\dot x&=&a(x,\pi)&+&A(x,\pi)z\\
\dot z&=&b(x,\pi)&+&B(x,\pi)z,
\end{array}
\end{equation}
with the following conditions on the domains of definition and the functions: 
\begin{itemize}
\item $\pi$ is a parameter vector varying in a subset $\Pi$ of $\R^q$ (for some $q$), with the property that for every $\widetilde\pi\in \Pi$ there is a smooth curve $\sigma\colon [0,\,1]\to \Pi$ such that $\sigma(0)=\widetilde \pi$ and $\sigma(s)\in{\rm int}\,\Pi$ for all $s>0$; in particular ${\rm int}\,\Pi$ is dense in $\Pi$. (For applications to reaction networks, we will have $\mathbb R^q_{> 0}\subseteq \Pi\subseteq\mathbb R^q_{\geq 0}$.)
\item There are distinguished open and non-empty sets $V_1\subseteq \mathbb R^n$, $V_2\subseteq \mathbb R^p$ such that 
\begin{itemize}
\item  $a,\,b,\,A,\,B$ are (vector-valued resp.\ matrix-valued) functions of dimension $n\times 1$,  $p\times 1$, $n\times p$, and $p\times p$, respectively, that are defined and sufficiently differentiable on an open neighborhood of $\overline{V_1}\times\overline{\Pi}$;
\item $\overline{V_1}\times\overline{V_2}$ is positively invariant for system \eqref{generalsystem}, for any $\pi\in\Pi$.
\end{itemize}
(In applications to reaction networks, we will have $V_1=\mathbb R^n_{>0}$ and $V_2=\mathbb R^p_{>0}$,  where $p\le P$ is to be defined later.)
\item For given $\pi\in\Pi$ define
\[
\Omega_\pi^*:=\big\{x\,|\,a(x,\pi),\,A(x,\pi),\,b(x,\pi),\,B(x,\pi),\text{ are defined and sufficiently differentiable}\big\}
\]
and
\[
\Omega_\pi:=\big\{x\in\Omega_\pi^*\,|\,B(x,\pi) \text{ is invertible}\big\},
\]
noting that both are open subsets of $\mathbb R^n$  and $V_1\subseteq \Omega^*_{\hat\pi}$. 
\end{itemize}

The general question is whether (and how) it is possible  to eliminate $z$, thus obtaining a reduced system in $x$ alone. For  reaction networks, the ``classical'' quasi-steady state (QSS) reduction (see Briggs and Haldane \cite{briggshaldane}, Segel and Slemrod \cite{SSl} for the Michaelis-Menten system, and many others) has been in use for a long time. 
For this heuristic reduction procedure one assumes invertibility of $B(x,\pi)$ for all $x$ and furthermore assumes that the rate of change for $z$ is equal to zero (or rather, almost zero in a relevant time regime). Then the ensuing algebraic relation
\[
0=\dot z= b(x,\pi)+B(x,\pi)z
\]
yields the QSS reduced system
\begin{equation}\label{cqssredsys}
\dot x=a(x,\pi)-A(x,\pi)B(x,\pi)^{-1} b(x,\pi),\qquad {x\in \Omega_\pi.}
\end{equation}

A priori this is a formal procedure, and one should not generally expect any similarity between solutions of \eqref{generalsystem} and \eqref{cqssredsys}. But this may be the case in certain parameter regions. A general discussion of QSS reductions, consistency conditions and their relation to singular perturbations was given in \cite{gwz3}. In the present paper we will obtain detailed results for systems of the special type \eqref{generalsystem}.

When Michaelis and Menten, and Briggs and Haldane, introduced QSS reduction, singular perturbation theory did not even exist. But starting with the seminal paper \cite{hta} by Heineken et al., the interpretation of QSS reduction as a singular perturbation reduction in the sense of Tikhonov \cite{tikh} and Fenichel \cite{fenichel} has been established in the literature. A convenient version of the classical reduction theorem is stated in Verhulst \cite{verhulst}, Thm.~8.1. We will refer to this version, with all differential equations autonomous.

We need to adjust system \eqref{generalsystem} by introducing a ``small parameter''  \cite{gw2,gwz}. To this end, we fix a suitable (to be specified below) parameter value $\widehat \pi$, consider a curve $\varepsilon\mapsto \widehat\pi+\varepsilon\pi^*+\cdots \in \Pi$ in the parameter space and expand
\[
\begin{array}{rcl}
a(x,\widehat \pi +\varepsilon\pi^*+\cdots)&=&a_0(x)+\varepsilon a_1(x)+\cdots\\
A(x,\widehat \pi +\varepsilon\pi^*+\cdots)&=&A_0(x)+\varepsilon A_1(x)+\cdots\\
b(x,\widehat \pi +\varepsilon\pi^*+\cdots)&=&b_0(x)+\varepsilon b_1(x)+\cdots\\
B(x,\widehat \pi +\varepsilon\pi^*+\cdots)&=&B_0(x)+\varepsilon B_1(x)+\cdots,
\end{array}
\]
where $a(x,\widehat\pi)=a_0(x)$,  $A(x,\widehat\pi)=A_0(x)$, etc., to obtain a system with small parameter $\varepsilon$, which we rewrite in the form
\begin{equation}\label{epsilonsystem}
\begin{array}{rcccccl}
\dot x&=&a_0(x)+A_0(x)z&+&\varepsilon\left(a_1(x)+A_1(x)z\right)&+&\cdots\\
\dot z&=&b_0(x)+B_0(x)z&+&\varepsilon\left(b_1(x)+B_1(x)z\right)&+&\cdots
\end{array}
\end{equation}
for {$x\in \Omega_{\widehat\pi}$ and $z\in V_2$. }
As before, for $a_0(x),b_0(x),$ $A_0(x), B_0(x)$, the dimensions are  $n\times 1$,  $p\times 1$, $n\times p$, and $p\times p$, respectively.

This system is in general not in the standard form for singular perturbations given in \cite{verhulst}, thus slow and fast variables are not separated. But assuming the existence of a transformation to standard form for singular perturbations, at $\varepsilon=0$ one has a positive dimensional local manifold of stationary points, usually called the {\em critical manifold}. In turn, the existence of such a critical manifold imposes conditions on the parameter value $\widehat\pi$. (For singular perturbations, this is part of the suitability mentioned above; see \cite{gwz,gwz3} for details.)

 The connection between QSS and singular perturbation reductions was generally discussed in \cite{gwz3}, Section 4. The special type of system \eqref{generalsystem} allows for a simplified, shorter discussion, as follows. 
\begin{itemize}
\item We consider throughout $\pi$ such that $\Omega_\pi\not=\emptyset$, hence $B(x,\pi)$ is invertible for some $x$.
\item As shown in \cite{gwz3}, Proposition 2, the minimal requirement for QSS to be consistent for all small perturbations of $\widehat \pi$ is invariance of  the {\em QSS variety}
\[
Y_{\widehat\pi}:=\big\{(x,z)\in\Omega_{\widehat\pi}\times{V_2}\,|\,B(x,\widehat\pi)z+ b(x,\widehat\pi)=0\big\}.
\]
This requirement guarantees for small perturbations of $\widehat\pi$ that the $x$--components of solutions to \eqref{generalsystem} with initial value in $Y_{\widehat\pi}$ remain close to the corresponding solutions of \eqref{cqssredsys}, and it is also necessary for this property. 
\item The invariance condition may be expressed as
\[
\left(Db_0(x)-DB_0(x)(B_0(x)^{-1}b_0(x))\right)\left(a_0(x)-A_0(x)B_0(x)^{-1} b_0(x)\right)=0,\quad {x\in \Omega_{\widehat\pi}.}
\]
(See \cite{gwz3} for the general form; to verify directly in the given setting, evaluate $\frac{d}{dt}\left(B_0(x)z+ b_0(x)\right)=0$ on $Y_{\widehat\pi}$.)
\item The invariance condition alone is too weak to ensure quasi-steady state properties on par with expectations concerning fast-slow timescales. As a simple example, consider the case $b_0=0$, with system
\[
\begin{array}{rcccccl}
\dot x&=&a_0(x)+A_0(x)z&+&\varepsilon\left(a_1(x)+A_1(x)z\right)&+&\cdots\\
\dot z&=&B_0(x)z&+&\varepsilon\left(b_1(x)+B_1(x)z\right)&+&\cdots,\\
\end{array}
\]
and the QSS variety given by $z=0$. Whenever $a_0$ is non-zero, the rate of change for $x$ is of order one on the QSS manifold characterized by ($\dot z=0$, hence) $z=O(\varepsilon)$. 

\item Conclusion (see also the extended discussion in \cite{gwz3}, Section 4): The natural way of transferring the QSS assumption for $z$ to a singular perturbation scenario is to stipulate that at $\varepsilon=0$, the equation $B_0(x)z+b_0(x)=0$ defines a set of stationary points of system  \eqref{epsilonsystem}.
\end{itemize}

 The above definitions and reasoning lead us to assume the following conditions in the sequel.\\
 
\noindent{\bf Blanket conditions.}
 \begin{enumerate}[(i)]
\item $\Omega_{\widehat\pi}\cap V_1\not=\emptyset$.
\item $z=-B_0(x)^{-1}b_0(x)$ defines a set of stationary points of system \eqref{epsilonsystem} for $\epsilon=0$, thus 
\[
a_0(x)-A_0(x)B_0(x)^{-1}b_0(x)=0
\]
holds for all {$x$ in $\Omega_{\widehat\pi}$}. We denote the critical set defined by $z=-B_0(x)^{-1}b_0(x)$ by $Y_{\widehat\pi}$.
\end{enumerate}

 One may rephrase the second condition for the original parameter dependent system \eqref{generalsystem} as an identity
\begin{equation*}\label{tfpvcond}
a(x,\widehat\pi)-A(x,\widehat\pi)B(x,\widehat\pi)^{-1} b(x,\widehat\pi)=0 \quad\text{  for all  }  {x\in \Omega_{\widehat\pi},}
\end{equation*}
which, in turn, imposes conditions on the parameter value $\widehat \pi$. Thus we obtain a special instance of a {\em Tikhonov-Fenichel parameter value, briefly TFPV}, as introduced in \cite{gwz}, but for a prescribed critical manifold.

We now turn to reductions of system \eqref{epsilonsystem}, starting with the singular perturbation reduction with prescribed critical manifold $Y_{\widehat\pi}$. It will be convenient to introduce
\[
w(x):=B_0(x)^{-1}b_0(x)\quad  \in \R^p\textrm{ for }{x\in \Omega_{\widehat\pi},}
\]
and thus have
\[
 z=-w(x) \text{  on  }Y_{\widehat\pi}
\]
by the blanket conditions, as well as
\[
\begin{array}{rcl}
b_0(x)+B_0(x)z&=&B_0(x)\left(w(x)+z\right),\\
a_0(x)+A_0(x)z&=&A_0(x)\left(w(x)+z\right).
\end{array}
\]

We now carry out the decomposition and reduction procedure from \cite[Theorem 1, Remarks 1 and 2]{gw2}  with
\begin{equation}\label{h0eq}
h^{(0)}(x,z)=\begin{pmatrix}a_0(x)+A_0(x)z\\
                               b_0(x)+B_0(x)z\end{pmatrix}, \quad h^{(1)}(x,z)=\begin{pmatrix}a_1(x)+A_1(x)z\\
                               b_1(x)+B_1(x)z\end{pmatrix}.
\end{equation}
(We note that, in the given situation, reduction formulas provided earlier by Fenichel \cite{fenichel} and Stiefenhofer \cite{sti} are also applicable.)
 The following is a straightforward application of \cite{gw2}.

\begin{lemma}\label{lem:lessuglylemma}
Assume blanket conditions (i) and (ii) hold.

\begin{enumerate}[(a)]
\item \emph{Decomposition}: One has
\[
\begin{pmatrix}a_0(x)+A_0(x) z\\
                               b_0(x)+B_0(x) z\end{pmatrix}=\begin{pmatrix}A_0(x)\\
                               B_0(x)\end{pmatrix}\cdot (w(x)+z)=P(x)\cdot \mu(x,z),
\]
where $P(x)=\begin{pmatrix}A_0(x)\\
                               B_0(x)\end{pmatrix}$ and $\mu(x,z)=w(x)+z$ is a map from {$\Omega_{\widehat\pi}\times \R^p$} to $\R^p$, and furthermore
\[
D\mu(x,z)=\Big(Dw(x)\quad I_p \Big) \ \in \R^{p\times (n+p)}.
\]
(Here $I_p$ denotes the $p\times p$ identity matrix.)
\item \emph{Condition for reducibility}: Define
\[
\begin{array}{rcl}
\Delta(x)  &:=&D\mu(x,z) \cdot P(x) \\
         &=&Dw(x)\,A_0(x)+B_0(x)=M(x) B_0(x)\quad \text{on }Y_{\widehat\pi},
\end{array}
\]
with
\[
M(x):=Dw(x) A_0(x) B_0(x)^{-1}+I_p \quad  \in \R^{p\times p}\quad \text{on }Y_{\widehat\pi}.
\]
Then a (local) Tikhonov-Fenichel reduction with a linearly attractive critical manifold $Y_{\widehat\pi}$ exists if and only if the open set
\[
\widetilde\Omega_{\widehat\pi}:=\left\{x\in \Omega_{\widehat\pi}\,|\,\text{ all eigenvalues of } \Delta(x)\text{ lie in the open left half plane }\right\}\subseteq \Omega_{\widehat\pi}
\]
is non-empty.
\item Under the conditions stated in (b), the reduced system on $Y_{\widehat\pi}$, in slow time scale $\tau=\varepsilon t$, is obtained by multiplication of the projection matrix
\begin{align*}
Q(x) &:=I_{n+p}-\begin{pmatrix}A_0(x)\Delta(x)^{-1}\\B_0(x)\Delta(x)^{-1}\end{pmatrix}\cdot \Big(Dw(x)  \quad I_p\Big)\\
 &=I_{n+p}-\begin{pmatrix}A_0(x)B_0(x)^{-1}M(x)^{-1}\\ M(x)^{-1}\end{pmatrix}\cdot\Big(Dw(x) \quad I_p\Big)\\
\end{align*}
with 
\[
h^{(1)}(x)=\begin{pmatrix}a_1(x)-A_1(x)w(x)\\ b_1(x)-B_1(x)w(x)\end{pmatrix}
\]
as in \eqref{h0eq}.
\end{enumerate}
\end{lemma}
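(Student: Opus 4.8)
The plan is to recognize the whole statement as the specialization of the coordinate-free reduction of \cite{gw2} to the present product structure, so that parts (a)--(c) become verifications rather than new arguments. The essential observation is that, thanks to the blanket conditions, $h^{(0)}$ admits a \emph{product decomposition} $h^{(0)}(x,z)=P(x)\cdot\mu(x,z)$ in which $\mu$ vanishes exactly on the prescribed critical set $Y_{\widehat\pi}$; this is precisely the input required by \cite[Theorem 1]{gw2}.

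For part (a) I would simply substitute the two algebraic identities recorded just before the lemma, namely $b_0(x)+B_0(x)z=B_0(x)(w(x)+z)$ and $a_0(x)+A_0(x)z=A_0(x)(w(x)+z)$. The first holds by the definition $w(x)=B_0(x)^{-1}b_0(x)$, and the second is where blanket condition (ii) enters, since it gives $a_0(x)=A_0(x)w(x)$. Stacking the two rows yields $h^{(0)}=P\mu$ with $P=\begin{pmatrix}A_0\\B_0\end{pmatrix}$ and $\mu(x,z)=w(x)+z$. Differentiating $\mu$ with respect to $(x,z)$ then gives $D\mu=(Dw\quad I_p)$ immediately, the $I_p$ block coming from the $z$-dependence.

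For part (b) the computation $\Delta=D\mu\cdot P=(Dw\quad I_p)\begin{pmatrix}A_0\\B_0\end{pmatrix}=Dw\,A_0+B_0$ is a block product, and factoring out $B_0$ on the right (legitimate since $B_0$ is invertible on $\Omega_{\widehat\pi}$) produces $\Delta=M B_0$ with $M=Dw\,A_0 B_0^{-1}+I_p$. Before invoking \cite{gw2} I would check its structural hypotheses: $P$ has full column rank $p$ because its lower block $B_0$ is invertible, and $D\mu$ has full row rank $p$ because of the $I_p$ block; hence $\{\mu=0\}=Y_{\widehat\pi}$ is a genuine $n$-dimensional manifold of stationary points of $h^{(0)}$. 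With these verified, \cite[Theorem 1]{gw2} applies verbatim: a local Tikhonov--Fenichel reduction with $Y_{\widehat\pi}$ linearly attractive exists exactly on the set where $\Delta(x)$ has only eigenvalues in the open left half-plane, that is, on $\widetilde\Omega_{\widehat\pi}$, and the reduction requires this set to be non-empty.

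For part (c) I would quote the projection $Q=I_{n+p}-P\,\Delta^{-1}D\mu$ from \cite[Theorem 1, Remarks 1 and 2]{gw2} and substitute the explicit $P$ and $D\mu$; the second displayed form then follows by writing $\Delta^{-1}=(MB_0)^{-1}=B_0^{-1}M^{-1}$, which turns $A_0\Delta^{-1}$ into $A_0B_0^{-1}M^{-1}$ and $B_0\Delta^{-1}$ into $M^{-1}$. Finally $h^{(1)}(x)$ is $h^{(1)}(x,z)$ from \eqref{h0eq} evaluated on the critical manifold $z=-w(x)$, giving the stated vector. The only genuinely non-mechanical point is the verification that the present non-standard-form system meets the hypotheses of the cited theorem --- the product decomposition and the two rank conditions --- together with confirming that the eigenvalue location of $\Delta$ is the correct translation of linear attractivity of $Y_{\widehat\pi}$; once these are settled, parts (a)--(c) reduce to the substitutions described above.
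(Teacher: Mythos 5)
Your proposal is correct and follows essentially the same route as the paper, which presents the lemma as ``a straightforward application of \cite{gw2}'' after setting up the product decomposition $h^{(0)}=P\mu$ from the identities $b_0+B_0z=B_0(w+z)$ and $a_0+A_0z=A_0(w+z)$ supplied by the blanket conditions. Your explicit verification of the rank hypotheses of \cite[Theorem 1]{gw2} and the factorization $\Delta^{-1}=B_0^{-1}M^{-1}$ simply fills in details the paper leaves implicit.
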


To keep notation manageable, we now suppress the argument $x$ in $a_0,a_1,\, A_0,A_1,\,b_0,b_1\,B_0,B_1\,w, \Delta, M$ and their derivatives. 
The essential part of the reduction is given in the following proposition. Note that on $Y_{\widehat\pi}$ it suffices to consider the equation for $x$.

\begin{proposition}%\label{lessuglyprop}
 Given the blanket conditions (i) and (ii), assume that the conditions in part (b) of  Lemma \ref{lem:lessuglylemma} hold, and consider $Q$ as a $2\times 2$ block matrix of dimension $(n,p)\times(n,p)$. Then the upper left block equals
\[
I_n-A_0\,B_0^{-1}\,M^{-1}\,Dw
\]
and the upper right block equals
\[
-A_0\,B_0^{-1}\,M^{-1}.
\]
The reduced equation, in slow time $\tau=\varepsilon t$, on $Y_{\widehat\pi}$ yields the system
\begin{equation}\label{TFred}
\frac{dx}{d\tau}=\left(I_n-A_0\,B_0^{-1}\,M^{-1}\,Dw\right)\left(a_1-A_1w\right)-\left(A_0\,B_0^{-1}\,M^{-1}\right)\left(b_1-B_1w\right)
\end{equation}
for the projection of a solution $(x(\tau),\,z(\tau))$ of the reduced system on $Y_{\widehat\pi}$ to its first component. This may be rewritten as
\[
\frac{dx}{d\tau}=
\left(I_n-A_0\,(DwA_0+B_0)^{-1}\,Dw\right)\left(a_1-A_1w\right)-A_0\,(DwA_0+B_0)^{-1}\left(b_1-B_1w\right).
\]
\end{proposition}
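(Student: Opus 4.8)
The plan is to treat the proposition as a direct computation from part (c) of Lemma~\ref{lem:lessuglylemma}, where the reduced vector field is already given as $Q\,h^{(1)}$; the whole task is then to read off the block structure of $Q$ and perform a single block multiplication. First I would expand the outer-product term appearing in the second expression for $Q$, namely the product of the $(n+p)\times p$ block column $\begin{pmatrix}A_0 B_0^{-1}M^{-1}\\ M^{-1}\end{pmatrix}$ with the $p\times(n+p)$ block row $\begin{pmatrix}Dw & I_p\end{pmatrix}$. Its upper blocks are $A_0 B_0^{-1}M^{-1}\,Dw$ (of size $n\times n$) and $A_0 B_0^{-1}M^{-1}$ (of size $n\times p$), so subtracting the product from $I_{n+p}$ yields precisely the upper-left block $I_n-A_0 B_0^{-1}M^{-1}\,Dw$ and the upper-right block $-A_0 B_0^{-1}M^{-1}$ asserted in the statement. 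The lower blocks, $-M^{-1}Dw$ and $I_p-M^{-1}$, play no role in the sequel.

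Next I would invoke the blanket conditions: on the critical manifold $Y_{\widehat\pi}$ the fast variable is pinned to $z=-w(x)$, so the reduced dynamics is fully captured by the equation for $x$, that is, by the first $n$ rows of $Q\,h^{(1)}$. Multiplying the upper block row of $Q$ against the column $h^{(1)}$, whose blocks are $a_1-A_1 w$ and $b_1-B_1 w$, gives
\[
\bigl(I_n-A_0 B_0^{-1}M^{-1}\,Dw\bigr)\bigl(a_1-A_1 w\bigr)-A_0 B_0^{-1}M^{-1}\bigl(b_1-B_1 w\bigr),
\]
which is exactly \eqref{TFred}.

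To reach the second displayed form I would eliminate $M$ in favour of $\Delta$. From the definition $M=Dw\,A_0 B_0^{-1}+I_p$ one obtains $MB_0=Dw\,A_0+B_0=\Delta$ on $Y_{\widehat\pi}$, an identity already recorded in part (b) of the lemma; hence $B_0^{-1}M^{-1}=(MB_0)^{-1}=(Dw\,A_0+B_0)^{-1}$. Substituting this single inversion identity into \eqref{TFred} produces the claimed rewriting.

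I do not anticipate a genuine obstacle. The substantive content---that a Tikhonov--Fenichel reduction with the prescribed critical manifold exists and that its slow vector field equals $Q\,h^{(1)}$---is already furnished by part (c) together with the decomposition theorem of \cite{gw2}; the proposition is only the specialization of that formula to the block form \eqref{h0eq}. The points requiring care are purely organizational: keeping the partition $(n,p)\times(n,p)$ consistent, distributing the outer product correctly over the four blocks, and verifying the relation $B_0^{-1}M^{-1}=\Delta^{-1}$, which is also exactly what reconciles the two equivalent expressions for $Q$ stated in the lemma.
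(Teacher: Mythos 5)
Your computation is correct and is exactly the argument the paper intends: the proposition is stated without a separate proof precisely because it is the direct block-wise evaluation of $Q\,h^{(1)}$ from Lemma~\ref{lem:lessuglylemma}(c), restricted to the first $n$ components on $Y_{\widehat\pi}$ where $z=-w(x)$, followed by the identity $B_0^{-1}M^{-1}=\Delta^{-1}=(Dw\,A_0+B_0)^{-1}$ for the rewriting. Nothing is missing.
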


This general reduction formula may seem rather unwieldy, given the seemingly simple starting point \eqref{generalsystem}. For the purpose of illustration we look at the smallest dimension.

\begin{proposition}%\label{oneoneprop} 
For $n=p=1$ the reduced system in slow time is given by
\begin{equation*}
\frac{dx}{d\tau}=\frac{(B_0a_1-A_0b_1)-(B_0A_1-A_0B_1)w}{B_0+w^\prime A_0}.
\end{equation*}
\end{proposition}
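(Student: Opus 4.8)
The plan is to specialize the general reduction formula \eqref{TFred} from the previous proposition to the scalar case $n=p=1$, where all of $a_0,a_1,A_0,A_1,b_0,b_1,B_0,B_1,w$ become scalar functions of the single variable $x$. In this setting the matrix products all collapse to ordinary products of real numbers, $I_n$ becomes $1$, inverses become reciprocals, and $Dw$ becomes the scalar derivative $w'$. The first step is simply to rewrite each block of the reduction formula under these identifications.

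Concretely, I would start from the second (rewritten) form of the reduced equation in the preceding proposition,
\[
\frac{dx}{d\tau}=\left(1-A_0\,(w'A_0+B_0)^{-1}\,w'\right)\left(a_1-A_1w\right)-A_0\,(w'A_0+B_0)^{-1}\left(b_1-B_1w\right),
\]
using that in the scalar case $M=w'A_0B_0^{-1}+1$ so that $w'A_0+B_0=MB_0=\Delta$ is just the scalar $B_0+w'A_0$. The coefficient of $(a_1-A_1w)$ then simplifies via
\[
1-\frac{A_0w'}{B_0+w'A_0}=\frac{B_0+w'A_0-A_0w'}{B_0+w'A_0}=\frac{B_0}{B_0+w'A_0},
\]
and the coefficient of $(b_1-B_1w)$ is $-A_0/(B_0+w'A_0)$.

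Substituting these two simplified coefficients back in gives
\[
\frac{dx}{d\tau}=\frac{B_0\,(a_1-A_1w)-A_0\,(b_1-B_1w)}{B_0+w'A_0},
\]
after which the only remaining task is to expand the numerator and regroup the terms to match the claimed expression. Distributing yields $B_0a_1-B_0A_1w-A_0b_1+A_0B_1w$; collecting the two terms without $w$ as $(B_0a_1-A_0b_1)$ and the two terms carrying $w$ as $-(B_0A_1-A_0B_1)w$ reproduces exactly the stated formula.

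Since every step here is a direct scalar specialization of an already-established formula, there is no genuine obstacle to overcome; the proof is essentially a verification. The only point requiring slight care is confirming that $\Delta=B_0+w'A_0$ is nonzero on $\widetilde\Omega_{\widehat\pi}$ (which is guaranteed by the eigenvalue condition in Lemma \ref{lem:lessuglylemma}(b), since in dimension one the single eigenvalue of $\Delta$ lying in the open left half-plane forces $\Delta\neq 0$), so that all the reciprocals and the final fraction are well defined. With that noted, the algebraic collection of terms completes the argument.
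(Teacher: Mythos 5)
Your proposal is correct and follows exactly the route the paper intends: the proposition is stated as an immediate scalar specialization of the general reduction formula \eqref{TFred}, and your algebraic simplification of the two coefficients (together with the remark that $\Delta=B_0+w'A_0\neq 0$ on $\widetilde\Omega_{\widehat\pi}$ by the eigenvalue condition) is precisely the verification the paper leaves to the reader.
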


We also make note of an important special case.

\begin{proposition}\label{zeezeroprop}
When $w$ is constant, then the reduced equation is given by 
\begin{equation*}
\frac{dx}{d\tau}=a_1-A_1w-A_0B_0^{-1}(b_1-B_1w).
\end{equation*}
In particular when $w=0$ (thus the critical manifold is given by $z=0$) the reduced equation in slow time reads
\begin{equation*}
\frac{dx}{d\tau}=a_1-A_0B_0^{-1}b_1.
\end{equation*}
\end{proposition}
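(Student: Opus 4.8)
The plan is to obtain Proposition~\ref{zeezeroprop} as a direct specialization of the general reduction formula established in the preceding proposition, namely
\[
\frac{dx}{d\tau}=\left(I_n-A_0\,(DwA_0+B_0)^{-1}\,Dw\right)\left(a_1-A_1w\right)-A_0\,(DwA_0+B_0)^{-1}\left(b_1-B_1w\right).
\]
Since $w(x)=B_0(x)^{-1}b_0(x)$ is by hypothesis independent of $x$, its Jacobian vanishes identically, so $Dw=0$ on $\Omega_{\widehat\pi}$. The whole argument then amounts to substituting $Dw=0$ into this expression and simplifying the resulting matrix products.

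Carrying this out, the factor $(DwA_0+B_0)^{-1}$ collapses to $B_0^{-1}$, and the product $A_0(DwA_0+B_0)^{-1}Dw=A_0B_0^{-1}Dw$ vanishes because its rightmost factor is the zero matrix. Hence the first parenthesis becomes the identity $I_n$, and the formula reads $\frac{dx}{d\tau}=(a_1-A_1w)-A_0B_0^{-1}(b_1-B_1w)$, which is exactly the asserted identity. Equivalently, one may route the computation through $M=DwA_0B_0^{-1}+I_p$: with $Dw=0$ one gets $M=I_p$, hence $M^{-1}=I_p$, and inserting this into \eqref{TFred} gives the same conclusion once the two products $A_0B_0^{-1}M^{-1}Dw$ and $A_0B_0^{-1}M^{-1}$ simplify to $0$ and $A_0B_0^{-1}$, respectively.

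For the final claim, I note that $w=0$ is a particular instance of $w$ constant, so the displayed formula applies verbatim; the terms $A_1w$ and $B_1w$ then vanish, leaving $\frac{dx}{d\tau}=a_1-A_0B_0^{-1}b_1$. I expect no genuine obstacle here, since the statement is essentially a substitution. The only point meriting a word of care is the reading of ``$w$ constant'' as independence of $w$ from the slow variable $x$ throughout $\Omega_{\widehat\pi}$, which is precisely what forces $Dw\equiv 0$; this is consistent with blanket condition (ii), which fixes $a_0=A_0w$ but imposes no constraint on whether $w$ varies with $x$. The remaining steps are routine matrix algebra.
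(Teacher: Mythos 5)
Your proof is correct and follows exactly the route the paper intends: the paper states Proposition~\ref{zeezeroprop} without a separate proof, treating it as the immediate specialization of \eqref{TFred} obtained by setting $Dw=0$ (hence $M=I_p$ and $\Delta=B_0$), which is precisely your computation. The final substitution $w=0$ is likewise routine and correctly handled.
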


Having obtained the singular perturbation reduction, we compare it to the classical quasi-steady state reduction.
\begin{proposition}\label{qssprop} Assume blanket conditions (i) and (ii) hold. 
\begin{enumerate}[(a)]
\item The classical quasi-steady state reduction of system \eqref{epsilonsystem} yields the QSS-reduced system 
\begin{equation}\label{qssredsysslow}
\frac{d x}{d\tau}=\left(a_1-A_1w-A_0B_0^{-1}(b_1-B_1w)\right)+\varepsilon(\cdots)
\end{equation}
in slow time.
\item The classical QSS reduction agrees with the singular perturbation reduction (up to higher order terms in $\varepsilon$) if and only if 
\begin{equation}\label{agreecond}
A_0B_0^{-1}M^{-1}\,Dw\big(A_0B_0^{-1}\left(B_1w-b_1\right)-\left(A_1w-a_1\right)\big)=0.
\end{equation}
Given this condition, Tikhonov's theorem also applies to the QSS reduction.
\end{enumerate}
\end{proposition}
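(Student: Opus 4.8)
The plan is to compute both reductions explicitly to leading order in $\varepsilon$ and compare them term by term; the statement then reduces to a single algebraic identity.

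For part (a) I would start from the fast equation in \eqref{epsilonsystem}, impose the quasi-steady state condition $\dot z=0$, and solve the resulting relation $b(x,\widehat\pi+\varepsilon\pi^*+\cdots)+B(x,\widehat\pi+\varepsilon\pi^*+\cdots)z=0$ for $z$ as a function of $x$ and $\varepsilon$. The one nonroutine step is expanding the matrix inverse to first order, $B^{-1}=B_0^{-1}-\varepsilon B_0^{-1}B_1B_0^{-1}+\cdots$, which yields $z=-w-\varepsilon B_0^{-1}(b_1-B_1w)+\cdots$. Substituting this into the slow equation, the decisive observation is that blanket condition (ii) forces the $O(1)$ contribution $a_0-A_0w$ to vanish identically on $Y_{\widehat\pi}$, so that $\dot x$ is of order $\varepsilon$. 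Passing to slow time $\tau=\varepsilon t$ and dividing by $\varepsilon$ then gives precisely \eqref{qssredsysslow}, with leading coefficient $a_1-A_1w-A_0B_0^{-1}(b_1-B_1w)$; this cancellation is exactly what makes slow time the correct scaling.

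For part (b) I would subtract this leading QSS field from the singular perturbation field \eqref{TFred}. Abbreviating $u:=a_1-A_1w$ and $v:=b_1-B_1w$, the difference (singular perturbation minus QSS) is $-A_0B_0^{-1}M^{-1}Dw\,u+A_0B_0^{-1}(I_p-M^{-1})v$. The key simplification is the identity $M-I_p=DwA_0B_0^{-1}$, immediate from the definition $M=DwA_0B_0^{-1}+I_p$, which gives $I_p-M^{-1}=M^{-1}(M-I_p)=M^{-1}DwA_0B_0^{-1}$; substituting this lets me factor out the common left factor $A_0B_0^{-1}M^{-1}Dw$ and collapse the difference to $A_0B_0^{-1}M^{-1}Dw\,(A_0B_0^{-1}v-u)$. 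Since the bracket equals $-\bigl(A_0B_0^{-1}(B_1w-b_1)-(A_1w-a_1)\bigr)$, this difference vanishes if and only if \eqref{agreecond} holds, which is the claimed equivalence.

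Finally, the assertion that Tikhonov's theorem applies to the QSS reduction follows with no extra work: under \eqref{agreecond} the formal QSS field coincides with the Tikhonov-Fenichel reduction \eqref{TFred}, to which Tikhonov's theorem applies by Lemma \ref{lem:lessuglylemma}(b) as soon as the attractivity set $\widetilde\Omega_{\widehat\pi}$ is nonempty, so the QSS trajectories inherit the same convergence guarantee. The only real care the argument demands is bookkeeping: the order-$\varepsilon^0$ cancellation in (a), and the tracking of signs and of the $M^{-1}$ factor in (b) so that the difference factors cleanly into the single product \eqref{agreecond}. I expect this sign-and-factoring step to be the main, though minor, obstacle.
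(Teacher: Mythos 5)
Your proposal is correct and follows essentially the same route as the paper: part (a) via the geometric-series expansion of $B^{-1}$ and cancellation of the $O(1)$ term by blanket condition (ii), and part (b) via the identity $M-I_p=Dw\,A_0B_0^{-1}$ to reduce the difference of the two fields to the single product in \eqref{agreecond}. The only cosmetic difference is that you subtract and factor where the paper equates and manipulates a chain of equivalences; the content is identical.
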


\begin{proof}The second equation in \eqref{epsilonsystem} shows
\[
z=-(B_0+\varepsilon B_1+\cdots)^{-1}(b_0+\varepsilon b_1+\cdots).
\]
With the geometric series one has
\[
\begin{array}{rcl}
(B_0+\varepsilon B_1+\cdots)^{-1}&=&B_0^{-1}(I+\varepsilon B_1B_0^{-1}+\cdots)^{-1}\\
                                                  &=& B_0^{-1}-\varepsilon B_0^{-1}B_1B_0^{-1}+\cdots
\end{array}
\]
and therefore
\[
z=- B_0^{-1}b_0+\varepsilon(B_0^{-1}B_1B_0^{-1}b_0-B_0^{-1}b_1)+\cdots
\]
Substitution into the first equation of \eqref{epsilonsystem}, replacing $b_0=B_0w$ and using blanket condition (ii), and further collecting terms yields the assertion of part (a). As for part (b), comparing equations \eqref{TFred} and \eqref{qssredsysslow} one obtains as necessary and sufficient conditions:
\[
\begin{array}{crcl}
 & A_0B_0^{-1}\left(B_1w-b_1\right)&=& A_0B_0^{-1}M^{-1}\left(B_1w-b_1\right)\\
& &+& A_0B_0^{-1}M^{-1}\,Dw\left(A_1w-a_1\right)\\
 \Leftrightarrow& A_0B_0^{-1}\left(I_p-M^{-1}\right)\left(B_1w-b_1\right)&=&A_0B_0^{-1}M^{-1}\,Dw\left(A_1w-a_1\right)\\
 \Leftrightarrow& A_0B_0^{-1}M^{-1}\left(M-I_p\right)\left(B_1w-b_1\right)&=&A_0B_0^{-1}M^{-1}\,Dw\left(A_1w-a_1\right)\\
\Leftrightarrow& A_0B_0^{-1}M^{-1}\,Dw\,A_0B_0^{-1}\left(B_1w-b_1\right)&=&A_0B_0^{-1}M^{-1}\,Dw\left(A_1w-a_1\right)\\
\end{array}
\]
recalling the definition of $M$ in the last step. The last assertion holds since (as noted in \cite{gwz3}) higher order terms in $\varepsilon$ are irrelevant for the convergence statement in Tikhonov's theorem.
\end{proof}

We recover a special case of \cite{gwz3}, Prop. 5.

\begin{corollary}\label{qsscor}When $w$ is constant (in particular when $b_0=0$), then the differential equations for the singular perturbation reduction and the QSS reduction in slow time agree up to terms of order $\varepsilon$. 
\end{corollary}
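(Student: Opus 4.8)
The plan is to obtain the corollary as an immediate consequence of the agreement criterion in Proposition~\ref{qssprop}(b). The key observation is that the nontrivial term on the left-hand side of the agreement condition \eqref{agreecond} carries $Dw$ as an interior factor, so the moment $w$ is constant this factor vanishes identically and the criterion is satisfied trivially.

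Concretely, I would first record that if $w$ is constant then $Dw=0$. Substituting $Dw=0$ into \eqref{agreecond} collapses the left-hand side to $A_0B_0^{-1}M^{-1}\cdot 0\cdot(\cdots)=0$, so the condition reads $0=0$ and holds. Since \eqref{agreecond} is, by Proposition~\ref{qssprop}(b), exactly the necessary and sufficient condition for the QSS reduction to agree with the singular perturbation reduction up to terms of order $\varepsilon$, the conclusion follows at once. The parenthetical case $b_0=0$ is subsumed, because then $w=B_0^{-1}b_0=0$ is in particular constant.

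Alternatively, I could make the agreement transparent by comparing the two explicit reduced equations directly. When $w$ is constant, Proposition~\ref{zeezeroprop} gives the singular perturbation reduction as $\tfrac{dx}{d\tau}=a_1-A_1w-A_0B_0^{-1}(b_1-B_1w)$, whereas Proposition~\ref{qssprop}(a) gives the QSS reduction in slow time as the same leading expression plus a term $\varepsilon(\cdots)$; the two therefore coincide modulo $O(\varepsilon)$. This route is really the first computation unwound: constancy of $w$ forces $M=DwA_0B_0^{-1}+I_p=I_p$, which collapses the projection $Q$ of Lemma~\ref{lem:lessuglylemma}(c) and removes the only discrepancy between the two formulas.

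Since both routes are short, I do not anticipate a substantial obstacle; the one point requiring care is simply that all invertibility and linear-attractivity hypotheses needed for \emph{both} reductions to exist are inherited from the blanket conditions and from Lemma~\ref{lem:lessuglylemma}(b), which the ambient Proposition~\ref{qssprop} already presupposes.
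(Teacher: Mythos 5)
Your proposal is correct and follows exactly the route the paper intends: the corollary is stated as an immediate consequence of Proposition~\ref{qssprop}(b), since $Dw=0$ makes the agreement condition \eqref{agreecond} hold trivially (and $b_0=0$ gives $w=0$, hence constant). Your alternative check via Propositions~\ref{zeezeroprop} and~\ref{qssprop}(a) is a consistent unwinding of the same computation, so there is nothing to add.
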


In general the QSS heuristic and singular perturbation reduction yield substantially different results, and the reduction by QSS is incorrect. However, for the following notable cases the two reductions are in agreement.
\begin{itemize}
\item $Dw=0$, thus $w$ is constant, see Corollary \ref{qsscor}. (As will turn out, this case occurs for many reaction networks.)
\item $A_0=0$. Here, system \eqref{h0eq} is in Tikhonov standard form with slow and fast variables separated.
\item $A_0B_0^{-1}\left(B_1w-b_1\right)=A_1w-a_1$. Here, both reductions have right hand side zero.
\end{itemize}
In dimension $2$ this list is complete (as seen by inspection of \eqref{agreecond}).

\medskip
To summarize: Given the blanket conditions (i) and (ii), as well as invertibility of the matrix $M$ (or $\Delta$) and the
 eigenvalue condition in Lemma \ref{lem:lessuglylemma}(b), we have determined a closed-form version of the reduced system by singular perturbations and clarified its relation to the classical QSS heuristic. The next step will be to apply these results to reaction networks, making use of their special properties.

%%%%%%%%%%%%%%%%%%%%%%%%%%%%%%%%%%%%%%%%%
\section{Application to  reaction networks}\label{crnsec}

We now return to the reaction networks from Section \ref{motivation}, and will extend results from \cite{feliu:intermediates,Fel_elim} on steady states to quasi-steady states. We assume that all the conditions stated after equation \eqref{generalsystem} hold, with $\mathbb R^q_{>0} \subseteq\Pi\subseteq \mathbb R^q_{\geq 0}$,
and  introduce the following further assumptions and definitions. Note that all requirements are satisfied for systems with mass-action kinetics.
\begin{itemize}
\item The species are ordered as $X_1,\dots,X_n,Z_1,\dots,Z_P$, such that the concentration vector is $(x,z)$.
\item The  rate function of a reaction  involving the non-interacting species $Z_i$ in the reactant is linear in $z_i$ and does not depend on any other $z_j$.
\item The rate function of a reaction that does not involve any non-interacting species in the reactant is constant in $z_i$, $i=1,\ldots,P$.
\item We order the set of reactions such that the first $m_1$ reactions only have species in $\mX$ in the reactant (without restrictions on  the product), and the last $m_2$ reactions  all have one non-interacting species in the reactant (and at most one in the product).  Thus,  $m=m_1+m_2$.
\end{itemize}

At the outset we consider a general parameter vector $\kappa=(\kappa_1,\ldots,\kappa_q)\in\Pi$ varying in the parameter set $\Pi$.   Let $v_1(x,\kappa)$ denote the vector of   rate functions for the first $m_1$ reactions (which by assumption do not depend on $z$) and let $v_2(x,z,\kappa)$ be the  vector of rate functions for the last $m_2$ reactions (which by assumption each component is linear in the concentration of the only non-interacting species in the reactant). Recall that, by assumption, all these functions are defined for all $(x,\kappa)$ in an open neighborhood of $\mathbb R^n_{\geq 0}\times \mathbb R^q_{\geq 0}$, and all $z\in \mathbb R^P$. We further assume  forward invariance of $\mathbb R^n_{\geq 0}\times \mathbb R^P_{\geq 0}$ for system \eqref{eq:crneq}.

The dynamical system \eqref{eq:crneq}, which evolves in $\R^{n+P}_{\ge 0}$ (by the invariance), may then be written as
\begin{align}\label{eq:sys1}
\begin{pmatrix} \dot{x} \\ \dot{z} \end{pmatrix} &=N v(x,z,\kappa)=
\begin{pmatrix} N_{11} & N_{12} \\  N_{21} & N_{22} \end{pmatrix} \begin{pmatrix} v_1(x,\kappa) \\ v_2(x,z,\kappa) \end{pmatrix} \nonumber \\
 &= \begin{pmatrix} N_{11}  \\ N_{21}  \end{pmatrix} v_1(x,\kappa) +\begin{pmatrix} N_{12}  \\ N_{22}  \end{pmatrix} v_2(x,z,\kappa).
\end{align}
Here the size of $N_{11}$ is $n\times m_1$, that of $N_{21}$ is $P\times m_1$, that of $N_{12}$ is $n\times m_2$ and that of $N_{22}$ is $P\times m_2$.
By assumption $v_2(x,z,\kappa)$ is linear in $z$, hence
\begin{equation}\label{eq:v2}
\begin{pmatrix}  N_{12} \\  N_{22} \end{pmatrix}  v_2(x,z,\kappa)= \begin{pmatrix}  K_1(x,\kappa) \\  K_2(x,\kappa) \end{pmatrix}  z,
\end{equation}
where $K_1$ is a $n\times P$ matrix and $K_2$ is  a $P\times P$ matrix.  We might take the system \eqref{eq:sys1} with \eqref{eq:v2} inserted to be of the form \eqref{generalsystem}. However, we will refrain from doing so here. Indeed we will modify the system before making the identification with \eqref{generalsystem}.

We note some crucial properties of $K_2(x,\kappa)$, most of which were already shown in \cite{feliu:intermediates,Fel_elim}.

\begin{lemma}\label{lem:compartment}
Let $\kappa\in\Pi$, and let $\Omega_\kappa$ be non-empty. Then the following hold.
\begin{enumerate}[(a)]
\item For all $x\in\Omega_\kappa\cap\mathbb R^n_{\geq 0}$,  $K_2(x,\kappa)$ is a compartmental matrix, that is, the diagonal entries of $K_2$ are non-positive, the off-diagonal entries non-negative, and all column sums are non-positive.
\item  For all $x\in\Omega_\kappa\cap\mathbb R^n_{\geq 0}$, all non-zero eigenvalues of $K_2(x,\kappa)$ have negative real part, and for the eigenvalue $0$ (if it occurs) the  geometric and algebraic multiplicity are equal.
\item Assume the rank of $K_2(x,\kappa)$ is equal to $p=P-k<P$ for all $x$ in an open set $\widetilde\Omega_\kappa\subseteq\Omega_\kappa$, and assume there are linearly independent linear forms $\lambda_1,\ldots,\lambda_k$ on $\mathbb R^P$ such that $\lambda_i (K_2(x,\kappa))=0$ for all $x$ and $\kappa$, and $1\leq i\leq k$. Then $K_2(x,\kappa)$ restricts to a linear map on ${\rm Ker}\,\lambda_1\cap\cdots\cap {\rm Ker}\,\lambda_k$. This map is invertible, and its eigenvalues are just the non-zero eigenvalues of  $K_2(x,\kappa)$.
\end{enumerate}
\end{lemma}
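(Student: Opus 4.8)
The plan is to handle the three parts in order, using (a) as the structural input for (b), and both (a) and (b) for (c). For part (a) I would argue directly from the combinatorial hypotheses on the last $m_2$ reactions. By assumption each such reaction $r$ has exactly one non-interacting species, say $Z_{j(r)}$, in its reactant and at most one in its product, and its rate is $v_{2,r}(x,z,\kappa)=c_r(x,\kappa)\,z_{j(r)}$ with $c_r\ge 0$ on $\Omega_\kappa\cap\R^n_{\ge 0}$ (rate functions are non-negative there). Reading off $K_2$ from \eqref{eq:v2} gives $(K_2)_{ij}=\sum_{r:\,j(r)=j}(\delta'_{ir}-\delta_{ir})\,c_r$. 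On the diagonal $\delta_{jr}=1$ and $\delta'_{jr}\in\{0,1\}$, so $(K_2)_{jj}\le 0$; off the diagonal $\delta_{ir}=0$ (the unique reactant species is $Z_j\ne Z_i$) and $\delta'_{ir}\ge 0$, so $(K_2)_{ij}\ge 0$; and each column sum is $\sum_{r:\,j(r)=j}c_r\big(\sum_i\delta'_{ir}-\sum_i\delta_{ir}\big)\le 0$ because $\sum_i\delta_{ir}=1$ and $\sum_i\delta'_{ir}\le 1$. This is exactly the compartmental sign pattern.

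For part (b) I would pass to $B:=-K_2$, which is then a $Z$-matrix (non-negative off-diagonal of $K_2$ becomes non-positive off-diagonal of $B$) with non-negative diagonal and non-negative column sums. Writing $B=sI-P$ with $s=\max_j B_{jj}$ gives $P\ge 0$ and, since every column sum of $P$ is at most $s$, one has $\rho(P)\le\norm[1]{P}\le s$. Hence every eigenvalue of $B$ is $s-\mu$ with $|\mu|\le\rho(P)\le s$, so $B$ is an M-matrix: its eigenvalues have non-negative real part, and if $\mathrm{Re}(s-\mu)=0$ then $\mathrm{Re}(\mu)=s\ge|\mu|$ forces $\mu=s$ and $s-\mu=0$. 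Translating back, the eigenvalues of $K_2$ lie in the closed left half plane and the only one on the imaginary axis is $0$; in particular all non-zero eigenvalues have strictly negative real part. For the semisimplicity of the eigenvalue $0$ I would use that $K_2$ is a Metzler matrix, so $e^{K_2 t}\ge 0$ for $t\ge 0$, while $\mathbf 1^\top K_2\le 0$ (non-positive column sums) shows that each column sum $\mathbf 1^\top e^{K_2 t}e_j$ is non-increasing, hence confined to $[0,1]$; thus $e^{K_2 t}$ stays bounded for $t\ge 0$, which rules out a non-trivial Jordan block at any eigenvalue on the imaginary axis, in particular at $0$. Equivalently, these are the standard spectral properties of compartmental matrices established in the cited references.

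For part (c) the argument is linear algebra at a fixed $x\in\widetilde\Omega_\kappa$ where the compartmental property, and hence part (b), applies. Since each $\lambda_i$ is a left null vector, $\lambda_i(K_2 v)=0$ for all $v$, so $\im K_2\subseteq W:={\rm Ker}\,\lambda_1\cap\cdots\cap{\rm Ker}\,\lambda_k$; as the $\lambda_i$ are independent, $\dim W=P-k=p=\rank K_2=\dim\im K_2$, whence $\im K_2=W$ and $K_2$ restricts to an endomorphism $\widetilde K_2$ of $W$. By part (b) the eigenvalue $0$ is semisimple, so $\R^P={\rm Ker}\,K_2\oplus\im K_2={\rm Ker}\,K_2\oplus W$ with both summands $K_2$-invariant; in this splitting $K_2$ is block diagonal with blocks $0$ (on ${\rm Ker}\,K_2$, of size $k$) and $\widetilde K_2$ (on $W$). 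The intersection ${\rm Ker}\,K_2\cap W=\{0\}$ forces $\widetilde K_2$ to be injective, hence invertible, so its spectrum omits $0$; comparing with the block-diagonal form then identifies the eigenvalues of $\widetilde K_2$ with precisely the non-zero eigenvalues of $K_2$.

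The step I expect to be the main obstacle is the semisimplicity of the zero eigenvalue in part (b): the M-matrix bound alone yields non-positive real parts and excludes non-zero imaginary eigenvalues, but not the absence of Jordan blocks at $0$. This requires the additional conservation input (boundedness of $e^{K_2 t}$, or equivalently the known fine structure of compartmental matrices). It is also the one place where part (c) genuinely relies on (b): without semisimplicity the splitting $\R^P={\rm Ker}\,K_2\oplus\im K_2$ fails, $\im K_2$ would meet a generalized $0$-eigenvector, and the restriction $\widetilde K_2$ would no longer be invertible.
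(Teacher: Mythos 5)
Your proof is correct and follows essentially the same route as the paper: the sign pattern of the columns of $N_{22}$ for (a), exclusion of a Jordan block at $0$ via non-negativity and the decreasing total mass $\sum_i z_i$ for (b) (the paper phrases this as boundedness of solutions of $\dot z=K_2z$ in the positive orthant, you as boundedness of $e^{K_2t}$ — the same argument), and the dimension count $\im K_2=\mathrm{Ker}\,\lambda_1\cap\cdots\cap\mathrm{Ker}\,\lambda_k$ combined with the semisimple splitting $\R^P=\mathrm{Ker}\,K_2\oplus\im K_2$ for (c). The only difference is that you supply an explicit M-matrix/Perron--Frobenius argument for the spectral localization in (b), where the paper simply cites Anderson and Berman--Plemmons.
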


\begin{proof} (a) A column of $N_{22}$ contains one entry $-1$ and one entry $1$ (with all other entries zero) if it corresponds to a reaction in \eqref{eq:reactions} with some non-interacting species appearing on either side, and contains just one entry $-1$ (with all other entries equal to zero) if it corresponds to a reaction with a non-interacting species appearing just on the left hand side. Since the rate functions are non-negative, the assertion of (a) follows.

(b) The first assertion of (b) is well known, see e.g.  Anderson \cite{anderson-compartmental}, Thm.~12.1 or Chapter 6 of Berman and Plemmons \cite{berman} (noting that compartmental matrices are negative M-matrices). We include a proof of the second statement (which also is known) for the sake of completeness: Abbreviate $F:=K_2(x,\kappa)$, with $(x,\kappa)$ fixed, and consider the linear differential equation $\dot z=F\cdot z$. For this equation the positive orthant is positively invariant, and the equation admits the Lyapunov function $\sum_{i=1}^P z_i$, whence all solutions in the positive orthant are bounded for positive times. The existence of a non-trivial Jordan block for the eigenvalue $0$ would imply the existence of unbounded solutions for positive times; a contradiction.

(c) We have shown in (b) that $\mathbb R^P$ is the direct sum of the kernel and the image of $F$. Since the image  is contained in ${\rm Ker}\,\lambda_1\cap\cdots\cap {\rm Ker}\,\lambda_k$, and both have dimension $P-k$, they are equal. This shows invertibility and the assertion about the eigenvalues, since ${\rm Im}\,F$ is the 
sum of generalized eigenspaces for non-zero eigenvalues.
\end{proof}

The following remarks further illustrate the general structure of $K_2(x,\kappa)$.
\begin{itemize}
\item For fixed $\kappa$ one may always consider those $x$ for which $K_2(x,\kappa)$ has maximal rank, but this may force restriction to a non-empty open subset of $\Omega_\kappa$ (with some consequences for applying the reduction results from Subsection~\ref{gensetting}). In the case of mass-action kinetics this subset is open and dense.
\item The  relevant case of irreducible $K_2(x,\kappa)$ (on some open set) deserves closer attention. By Berman and Plemmons \cite{berman}, Ch.~6, Thm.~4.16 such matrices are either invertible or have one dimensional kernel. Going back to the argument in the proof of Lemma \ref{lem:compartment}(a), we see that the latter can happen only if all columns of $N_{22}$ contain an entry $1$ and an entry $-1$, but this means that $\sum_{i=1}^P z_i$ is a linear first integral, hence the hypothesis of Lemma \ref{lem:compartment}(c) is satisfied.
\end{itemize}

 We now require explicitly that in the situation of Lemma \ref{lem:compartment}(c), \textit{all} linear forms $\lambda_1,\ldots,\lambda_k$ define first integrals of the system \eqref{eq:sys1} and depend on $z$ alone.  Furthermore, we require that they are {\em induced by stoichiometry}, that is, they are defined by vectors in the left kernel of $N$. This situation is quite common for chemical and biochemical reaction networks \cite{feliu:intermediates,Fel_elim,saez_reduction}. In particular $\widetilde{\Omega}_\kappa=\Omega_\kappa$ in Lemma \ref{lem:compartment}(c). 
By considering the coefficients of the linear forms, we may thus write 
\begin{equation*}%\label{wmatrix}
\begin{pmatrix}\lambda_1\\ \vdots\\ \lambda_k\end{pmatrix}=W\in \mathbb \R^{k\times P},
\end{equation*}
such that $W N_{21}= W N_{22}=0$.
The following was shown in \cite{Fel_elim,saez_reduction}.
\begin{lemma}\label{lem:compstoich}
One may choose the $\lambda_i$ with pairwise disjoint support and coefficients $0$ and $1$ only. Thus, up to reordering of the $z_j$ one may assume 
\[
W=\begin{pmatrix}W^\prime & I_k\end{pmatrix}  \in  \R^{k\times P},
\]
and any level set $Wz=\alpha\in \mathbb R^k_{\ge 0}$ may be rewritten in the form
\[
\begin{pmatrix}z_{p+1}\\ \vdots\\ z_P\end{pmatrix}=\begin{pmatrix}\alpha_{1}\\ \vdots\\ \alpha_k\end{pmatrix}- W^\prime\begin{pmatrix}z_{1}\\ \vdots\\ z_p\end{pmatrix}. 
\]
Moreover, for any $\alpha\in\mathbb R_{\geq 0}^k$ and {$x$ in $\Omega_\kappa$} (cf.\ Lemma~\ref{lem:compartment}(c)) the 
linear system in $z$
\[
\alpha = Wz, \qquad 0 = N_{21} v_1(x,\kappa) + K_{2}(x,\kappa)z
\]
has a unique solution,  which is non-negative. 
\end{lemma}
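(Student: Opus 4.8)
My plan is to treat the three assertions in turn, since each rests on a different part of the structure already assembled. For the first claim I would read off a basis of the stoichiometric first integrals from the ``compartmental graph'' encoded by $N_{22}$. A row vector $w\in\R^P$ defines such a first integral precisely when $wN_{21}=0$ and $wN_{22}=0$; by Lemma~\ref{lem:compartment}(a) each column of $N_{22}$ is either $e_b-e_a$ (a reaction $Z_a\to Z_b$) or $-e_a$ (a reaction consuming $Z_a$ with no $Z$ in the product). Hence $wN_{22}=0$ forces $w_a=w_b$ across every edge $\{a,b\}$ and $w_a=0$ at every ``outflow'' vertex, so $w$ is constant on each connected component of the graph and vanishes on any component carrying an outflow. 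The indicator vectors of the remaining (``closed'') components then form a basis with pairwise disjoint supports and entries in $\{0,1\}$; these are the $\lambda_1,\dots,\lambda_k$. Choosing one coordinate from each closed component and reordering it to the end turns the corresponding columns of $W$ into $I_k$, giving $W=(W'\ I_k)$ with $W'$ a $0/1$ matrix. The second assertion is then immediate: $Wz=\alpha$ reads $W'(z_1,\dots,z_p)^\top+(z_{p+1},\dots,z_P)^\top=\alpha$, which one solves for the last $k$ coordinates.

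For existence and uniqueness in the third assertion I would use the decomposition $\R^P={\rm Ker}\,K_2\oplus{\rm Im}\,K_2$ from Lemma~\ref{lem:compartment}(b) together with the identity ${\rm Im}\,K_2={\rm Ker}\,W$ established in the proof of Lemma~\ref{lem:compartment}(c). Since $WN_{21}=0$, the inhomogeneity satisfies $-N_{21}v_1(x,\kappa)\in{\rm Ker}\,W={\rm Im}\,K_2$, so $K_2z=-N_{21}v_1$ is solvable, with solution unique up to ${\rm Ker}\,K_2$. Writing the general solution as $z_0+\zeta$ with $\zeta\in{\rm Ker}\,K_2$ and imposing $Wz=\alpha$ reduces to $W\zeta=\alpha$; because ${\rm Ker}\,K_2\cap{\rm Ker}\,W={\rm Ker}\,K_2\cap{\rm Im}\,K_2=\{0\}$ and both spaces have dimension $k$, the restriction $W|_{{\rm Ker}\,K_2}$ is an isomorphism and determines $\zeta$ uniquely, yielding the unique $z$.

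The non-negativity is where the real work lies, and I would obtain it dynamically rather than by inverting matrices in coordinates, since the naive coordinate reduction produces a matrix that is not visibly an $M$-matrix. Freeze $x\in\Omega_\kappa$ and consider the linear ODE $\dot z=N_{21}v_1(x,\kappa)+K_2(x,\kappa)z$. Its influx is non-negative, $N_{21}v_1\ge 0$, because the first $m_1$ reactions do not consume any $Z_i$, so $N_{21}\ge0$; combined with the non-negative off-diagonal entries of $K_2$ (Lemma~\ref{lem:compartment}(a)) the vector field points inward on each face $\{z_j=0\}$, and $\R^P_{\ge0}$ is forward invariant. Moreover $\tfrac{d}{dt}(Wz)=WN_{21}v_1+WK_2z=0$, so each affine set $\mathcal A_\alpha=\{Wz=\alpha\}$ is invariant; on $\mathcal A_\alpha$ the flow is affine with linear part $K_2|_{{\rm Ker}\,W}$, which is Hurwitz by Lemma~\ref{lem:compartment}(b),(c) and hence has a unique, globally asymptotically stable equilibrium. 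That equilibrium solves exactly our two equations and so coincides with the $z$ found above. Starting the flow at the non-negative point $(0,\alpha)\in\mathcal A_\alpha\cap\R^P_{\ge0}$ (available by the second assertion), the trajectory stays non-negative and converges to $z$, whence $z\ge0$. The main obstacle is precisely this last step: ruling out a negative equilibrium, which I expect to require the compartmental/Hurwitz structure through the global-stability argument rather than any direct sign computation.
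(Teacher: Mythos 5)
Your argument is essentially correct, but it takes a genuinely different route from the one the paper relies on: the paper gives no proof of this lemma at all and defers to \cite{Fel_elim,saez_reduction}. In those references the first two assertions are obtained from the component structure of the non-interacting graph much as you do, but the third assertion --- in particular non-negativity --- is proved algebraically, by solving the linear system via Cramer's rule and invoking the Matrix-Tree theorem to express every component of the solution as a quotient of sums of spanning-tree labels, each of which is a product of non-negative rate functions; this is exactly the machinery the present paper reuses in Lemma~\ref{lem:spanning} and in equation~\eqref{eq:h}. You instead prove non-negativity dynamically: forward invariance of $\R^P_{\geq 0}$ for the frozen-$x$ compartmental system $\dot z=N_{21}v_1+K_2z$, invariance of the affine sets $\{Wz=\alpha\}$, and global asymptotic stability of the unique equilibrium on each such set because $K_2$ restricted to ${\rm Ker}\,W={\rm Im}\,K_2$ is Hurwitz by Lemma~\ref{lem:compartment}(b),(c); the equilibrium is then a limit of points in the closed non-negative orthant. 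This is a clean, self-contained alternative that avoids determinant computations, at the price of not producing the explicit spanning-tree formula that the paper needs later. Two small points to tighten: in your analysis of the stoichiometric left kernel, the condition $wN_{21}=0$ also forces $w$ to vanish on any component containing a species \emph{produced} by one of the first $m_1$ reactions (inflow, not only outflow), which is needed to conclude that the surviving indicator vectors are exactly those of the components not linked to $*$; and in the uniqueness step the condition on $\zeta$ should read $W\zeta=\alpha-Wz_0$ rather than $W\zeta=\alpha$, though the isomorphism argument for $W|_{{\rm Ker}\,K_2}$ is unaffected.
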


\begin{proposition}\label{prop:crsredprop}
Consider the situation of Lemma \ref{lem:compartment}(c)  with all linear forms induced by stoichiometry and $W,W'$ as in  Lemma \ref{lem:compstoich}.
Denote by $\widetilde K_2$ the $p\times P$-matrix containing the first $p$ rows of $K_2$, by $\widetilde N_{21}$ the matrix containing the first $p$ rows of $N_{21}$, and partition 
\[
K_1=\begin{pmatrix}K_{11}&K_{12}\end{pmatrix},\quad \widetilde K_2=\begin{pmatrix}\widetilde K_{21}&\widetilde K_{22}\end{pmatrix}
\]
into matrices with $p$ resp. $k=P-p$ columns. 
 Then for any $\alpha\in\mathbb R_{\geq 0}^k$ the restriction of \eqref{eq:sys1} to the level set $Wz=\alpha$ induces the following system in $\mathbb R^{n+p}_{\geq 0}$:
\begin{align}\label{eq:reformulate}
\dot x&=N_{11}v_1(x,\kappa)+K_{12}(x,\kappa)\alpha +\left(K_{11}(x,\kappa)-K_{12}(x,\kappa)W^\prime\right)z_{1:p}\\
\dot{ z}_{1:p} &=\widetilde N_{21}v_1(x,\kappa)+\widetilde K_{22}(x,\kappa)\alpha+\left(\widetilde K_{21}(x,\kappa)-\widetilde K_{22}(x,\kappa)W^\prime\right) z_{1:p}, \nonumber
\end{align}
where $z_{1:p}=(z_1,\ldots,z_p)$.
\end{proposition}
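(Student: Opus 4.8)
The plan is to obtain \eqref{eq:reformulate} by a direct substitution of the constraint $Wz=\alpha$ into the full system, after first confirming that this restriction is well defined. With \eqref{eq:v2} inserted, system \eqref{eq:sys1} reads $\dot x = N_{11}v_1(x,\kappa)+K_1(x,\kappa)z$ and $\dot z = N_{21}v_1(x,\kappa)+K_2(x,\kappa)z$, so the whole task reduces to tracking how the vector $z$ enters these two right-hand sides once it is expressed through $(x,z_{1:p})$ on the level set.

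First I would verify (consistently with the standing assumption, made just before Lemma \ref{lem:compstoich}, that the $\lambda_i$ are first integrals) that each level set $\{Wz=\alpha\}$ is forward invariant, so that the reduced dynamics is meaningful. Differentiating along solutions gives $\frac{d}{dt}(Wz)=WN_{21}v_1+WK_2z$. The first summand vanishes because $WN_{21}=0$; for the second I would invoke the identity $K_2(x,\kappa)z=N_{22}v_2(x,z,\kappa)$ coming from \eqref{eq:v2}, together with $WN_{22}=0$, to get $WK_2(x,\kappa)z=WN_{22}v_2=0$ for every $z$, hence $WK_2\equiv 0$. Thus $Wz$ is constant along trajectories and the level set is invariant, and $(x,z_{1:p})\in\mathbb R^{n+p}$ is a legitimate chart on it.

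Using the normalized form $W=\begin{pmatrix}W'&I_k\end{pmatrix}$ from Lemma \ref{lem:compstoich}, the constraint solves the last $k$ coordinates as $(z_{p+1},\ldots,z_P)^\top=\alpha-W'z_{1:p}$. I would then substitute, exploiting the block partitions. Writing $K_1z=K_{11}z_{1:p}+K_{12}(z_{p+1},\ldots,z_P)^\top$ and inserting the constraint yields $K_1z=K_{12}\alpha+(K_{11}-K_{12}W')z_{1:p}$, which is exactly the non-$v_1$ part of the first equation of \eqref{eq:reformulate}. For the fast block, reading off the first $p$ rows of $\dot z=N_{21}v_1+K_2z$ gives $\dot z_{1:p}=\widetilde N_{21}v_1+\widetilde K_2z$, and the identical manipulation $\widetilde K_2z=\widetilde K_{22}\alpha+(\widetilde K_{21}-\widetilde K_{22}W')z_{1:p}$ produces the second equation. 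The components $(z_{p+1},\ldots,z_P)$ need not be tracked separately: their evolution is forced, and is automatically consistent, precisely because the level set is invariant.

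Since this is a linear change of variables followed by a substitution, there is no genuine analytical obstacle; the two points that require care are establishing the invariance just described and confirming the stated state space. For the latter I would note that forward invariance of $\mathbb R^{n+P}_{\geq 0}$ for the original system immediately yields $x\geq 0$ and $z_{1:p}\geq 0$ along any solution lying in the level set, so the induced system indeed evolves in $\mathbb R^{n+p}_{\geq 0}$ (in fact in the smaller region cut out additionally by $\alpha-W'z_{1:p}\geq 0$).
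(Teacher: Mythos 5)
Your proof is correct and follows essentially the same route as the paper, which simply replaces $(z_{p+1},\ldots,z_P)$ via Lemma \ref{lem:compstoich} and reads off the block structure; your additional verification that $WK_2\equiv 0$ makes the level set invariant (and that the induced system lives in $\mathbb R^{n+p}_{\geq 0}$) is a sound elaboration of what the paper leaves implicit in its standing assumptions.
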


\begin{proof}
This follows from replacing $(z_{p+1},\ldots, z_P)$ in \eqref{eq:sys1} by way of Lemma \ref{lem:compstoich}.
\end{proof}

In the following ve take $V_1=\R^n_{>0}$ and $V_2=\R^p_{>0}$.
Consider  a curve in the  joint parameter space of $\kappa$ and $\alpha$, $(\kappa,\alpha)=(\widehat\kappa,\widehat\alpha)+\epsilon(\kappa^*,\alpha^*)+\ldots\in\Pi\times R^{k}_{\ge0}.$ 
Then system \eqref{eq:reformulate} can be written in the form of  \eqref{epsilonsystem} with
\begin{equation}\label{eq:ab1}
a_0(x)=N_{11}v_1(x,\widehat\kappa)+K_{12}(x,\widehat\kappa)\widehat\alpha,\quad A_0(x)=K_{11}(x,\widehat\kappa)-K_{12}(x,\widehat\kappa)W^\prime
\end{equation}
\begin{equation}\label{eq:ab2}
b_0(x)=\widetilde N_{21}v_1(x,\widehat\kappa)+\widetilde K_{22}(x,\widehat\kappa)\widehat\alpha,\quad B_0(x)=\widetilde K_{21}(x,\widehat\kappa)-\widetilde K_{22}(x,\widehat\kappa)W^\prime.
\end{equation}

\begin{proposition}\label{lem:consequences}
Assume that notation and hypotheses are as in Proposition \ref{prop:crsredprop}. 
\begin{enumerate}[(a)]
\item If $\widehat\kappa$ is such that $K_2(x,\widehat\kappa)$ has rank $p$ in $\Omega_{\widehat\kappa}$, then $B_0(x)=\widetilde K_{21}(x,\widehat\kappa)-\widetilde K_{22}(x,\widehat\kappa)W^\prime$ is invertible and all its eigenvalues have negative real part. In particular blanket condition (i) is satisfied. 
\item If furthermore $v_1(x,\widehat\kappa)=0$ and $\widehat\alpha=0$, then $a_0(x)=0$ and $b_0(x)=0$, and blanket condition (ii) is satisfied. By (a) and Lemma \ref{lem:lessuglylemma}(b), $M=I_p$ and a Tikhonov-Fenichel reduction with a linearly attractive slow manifold exists. Furthermore $w=0$, hence Propositions \ref{zeezeroprop} and \ref{qssprop}  apply and  the singular perturbation reduction agrees with the classical QSS reduction. 
\item If blanket condition (ii) is satisfied and $w$ is constant, then $\Delta=B_0$ and a Tikhonov-Fenichel reduction with a linearly attractive slow manifold exists. By Corollary~\ref{qsscor}, the singular perturbation reduction agrees with the classical QSS reduction. 
\end{enumerate}
\end{proposition}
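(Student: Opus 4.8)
The plan is to handle the three parts in order, with part (a)---the spectral analysis of $B_0$---carrying essentially all the weight, while parts (b) and (c) follow by specializing the reduction results of Subsection~\ref{gensetting} once the structure of $w$ is understood.

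For part (a), the key observation is that $B_0(x)=\widetilde K_{21}(x,\widehat\kappa)-\widetilde K_{22}(x,\widehat\kappa)W'$ is exactly the matrix of the restriction of the compartmental matrix $K_2(x,\widehat\kappa)$ to the stoichiometric subspace $\mathrm{Ker}\,W=\bigcap_{i=1}^k\mathrm{Ker}\,\lambda_i$. Concretely, I would invoke Lemma \ref{lem:compstoich}: any $z$ with $Wz=0$ satisfies $z=\binom{I_p}{-W'}z_{1:p}$, so the first $p$ entries of $K_2z$ equal $(\widetilde K_{21}-\widetilde K_{22}W')z_{1:p}=B_0\,z_{1:p}$. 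Since $\mathrm{Ker}\,W$ is $K_2$-invariant by Lemma \ref{lem:compartment}(c), and the projection onto the first $p$ coordinates is an isomorphism $\mathrm{Ker}\,W\to\mathbb R^p$ because $W=(W'\ I_k)$ determines $z_{p+1:P}$ from $z_{1:p}$, the matrix $B_0$ represents this restriction in the coordinates $z_{1:p}$. Lemma \ref{lem:compartment}(c) then identifies its eigenvalues with the non-zero eigenvalues of $K_2$, and Lemma \ref{lem:compartment}(b) places these in the open left half plane; in particular $B_0$ is invertible. Blanket condition (i) follows from the remark that under mass-action the set where $K_2$ attains rank $p$ is open and dense, hence meets $V_1=\mathbb R^n_{>0}$.

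For part (b), substituting $v_1(x,\widehat\kappa)=0$ and $\widehat\alpha=0$ into \eqref{eq:ab1}--\eqref{eq:ab2} gives $a_0=0$ and $b_0=0$ at once, so blanket condition (ii) reduces to $0=0$ and $w=B_0^{-1}b_0=0$. Then $Dw=0$, whence $M=Dw\,A_0B_0^{-1}+I_p=I_p$ and $\Delta=MB_0=B_0$; part (a) supplies the eigenvalue condition, so the open set of Lemma \ref{lem:lessuglylemma}(b) is non-empty and the Tikhonov-Fenichel reduction with a linearly attractive critical manifold exists. Since $w=0$ is constant, Proposition \ref{zeezeroprop} and Proposition \ref{qssprop} (equivalently Corollary \ref{qsscor}) apply and yield agreement of the two reductions. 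Part (c) is the same final step without the vanishing of $a_0,b_0$: with $w$ constant one has $Dw=0$, hence $M=I_p$ and $\Delta=B_0$, so part (a) again gives the eigenvalue condition and the existence of the reduction, while Corollary \ref{qsscor} gives the agreement.

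I expect the only genuine obstacle to be the identification in part (a): one must check carefully that $B_0$ is \emph{conjugate} to $K_2\big|_{\mathrm{Ker}\,W}$ rather than merely reproducing its action on a subspace. This rests on $\mathrm{Ker}\,W$ being $K_2$-invariant---so that the image stays inside the subspace and the coordinate description closes up---together with the coordinate isomorphism furnished by $W=(W'\ I_k)$. Once this conjugacy is established the spectral claims are immediate from Lemma \ref{lem:compartment}(b),(c), and the remainder of parts (b) and (c) is routine bookkeeping with $w$, $M$, and $\Delta$.
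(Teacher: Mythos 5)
Your proof is correct and follows essentially the paper's own route: the paper's proof simply states that part (a) is a reformulation of Lemma \ref{lem:compartment}(c) and that the remaining assertions are clear, and your argument fills in precisely that reformulation (the conjugacy of $B_0$ to $K_2|_{\mathrm{Ker}\,W}$ via the coordinate isomorphism $z\mapsto z_{1:p}$) together with the routine bookkeeping for (b) and (c).
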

\begin{proof}
The first statement of part (a) is just a reformulation of Lemma \ref{lem:compartment}(c). The remaining assertions are clear.
\end{proof}

It was shown in \cite{saez_reduction} that the classical QSS reduction system can be interpreted as the ODE system associated with a reduced network in the species in $\mX$ with an appropriate choice of kinetics.   We will not go further into this here.

When the rate functions are multiple of some entry of $\kappa$, it follows from system \eqref{eq:reformulate} and the assumptions and notation of  system \eqref{epsilonsystem}  that
\begin{align*}
 a_1(x) & =N_{11}v_1(x,\kappa^*)+K_{12}(x,\kappa^*)\widehat\alpha+K_{12}(x,\widehat\kappa)\alpha^* & A_1(x) & =K_{11}(x,\kappa^*)-K_{12}(x,\kappa^*)W^\prime,  \\
 b_1(x) & =\widetilde N_{21}v_1(x,\kappa^*)+\widetilde K_{22}(x,\kappa^*)\widehat\alpha+\widetilde K_{22}(x,\widehat\kappa)\alpha^*, &  B_1(x)& =\widetilde K_{21}(x,\kappa^*)-\widetilde K_{22}(x,\kappa^*)W^\prime,
\end{align*}
hence $A_0,A_1$, resp.~$B_0,B_1$, are the same functions evaluated in different parameter points (c.f. \eqref{eq:ab1}, \eqref{eq:ab2}).

\medskip
To conclude this section, we illustrate that some of the first linear integrals from Lemma~\ref{lem:compartment} may depend on $x$ and $\k$ in some situations, and therefore may not all be induced by stoichiometry. 

\begin{example}\label{ex:1}
 Consider the network $Z_2\ce{<-[\kappa_1]} Z_1\ce{->[\kappa_2]} Z_3$ with only non-interacting species and mass-action kinetics. The matrix $K_2(x,\kappa)$ is found from
$$N v_2(x,z,\kappa)=\begin{pmatrix} -1 & -1 \\ 1& 0\\ 0&1\end{pmatrix} \begin{pmatrix} \kappa_1z_1\\ \kappa_2z_1\end{pmatrix}=\begin{pmatrix} -(\kappa_1+\kappa_2) & 0 & 0 \\ \kappa_1 & 0 & 0\\ \kappa_2 &0&0\end{pmatrix}z.$$
This matrix vanishes when evaluated at the two  linear  forms $z_1+z_2+z_3$ and  $\kappa_3z_1-\kappa_2z_2$. Both of these forms are independent of $x$, but only the first is independent of $\kappa$.
Similarly, consider the network $Z_1\ce{<-[\kappa_1]} X_1\ce{->[\kappa_2]} Z_2$ with two non-interacting species $Z_1,Z_2$ and mass-action kinetics.  Now $m_2=0$ and the matrix $K_2(x,\kappa)z$ is  obtained from
$$N_{22} v_2(x,z,\kappa)=\begin{pmatrix}  0 & 0\\0&0\end{pmatrix}z.$$
(Note that $N_{22}$ is a $2\times 0$ matrix and $v_2(x,z,\kappa)$ is a $0\times 1$ matrix.)
Hence, this matrix vanishes when evaluated at any linear form. 
However, the ODE system admits only one independent linear first integral in $z_1,z_2$, namely $\kappa_3z_1-\kappa_2z_2$, which depends on the choice of reaction rate constants.
\end{example}

% %%%%%%%%%%%%%%%%%%%%%%%%%%%%%%%%%%%%%%%%

\section{The non-interacting graph}
%\label{sec:graph}

In this section we relate the results of the previous section to a particular labelled multi-digraph built from the reaction network and the set of non-interacting species, thus extending the formalism introduced in  \cite{saez_reduction} from steady state to quasi-steady state. The two blanket conditions may be interpreted in terms of conditions on this graph, which (at least for relatively small networks) allows for easy identification of  TFPVs.

We keep the special designations for $\Pi$, $\Omega_\kappa$ etc.\ from Section \ref{crnsec}. Recall that the vector of   rate functions takes the form $v(x,z,\kappa)=(v_1(x,\kappa),v_2(x,z,\kappa))$ and that $v_2(x,z,\kappa)$ is linear in $z$, with each component depending on one $z_j$. 
Therefore, we write 
\[ v_2(x,z,\k)_i = \nu_2(x,\k)_i z_j,\quad j=j(i),\]
if $Z_j$ is the non-interacting species in the reactant of the considered reaction $r_{m_1+i}$.
Recall that the rate functions are evaluated only in the non-negative orthant. 
We now decide for what values of the parameters $\k$  the blanket conditions (i) and (ii) are satisfied.

Given $\k$, we follow \cite{saez_reduction} and introduce a labelled multi-digraph $\mG_{\k}=(\mN,\mE_{\k})$ describing the part of the network relating to the non-interacting species only. The node set is
$$\mN=\{Z_1,\ldots,Z_P,*\}$$
and the edge set $\mE_{\k}$ is defined by the following edges and labels, for each reaction $r_i$, $i=1,\ldots,m$,
\begin{align*}
Z_j & \ce{->[\nu_2(\cdot ,\k)_i]} Z_k && \text{if $r_{m_1+i}$ involves $Z_j$ in the reactant and $Z_k$ in the product,} \\ 
Z_j& \ce{->[\nu_2(\cdot,\k)_i]} * &&\text{if $r_{m_1+i}$ involves $Z_j$ in the reactant and no non-interacting  species in  the product,} \\
*& \ce{->[v_1(\cdot,\k)_i]} Z_k &&\text{if $r_i$ involves no non-interacting species in the reactant and $Z_k$ in  the product.}
\end{align*} 
The labels are functions of $x$. We let $\ell_{\kappa}(e)$ denote the label of a given edge $e$, which in turn is a function of $x$. 
For a submulti-digraph $\mG'=(\mN',\mE')$ of $ \mG_{\k}$,  we define the label of $\mG'$ by 
$$\ell_{\kappa}(\mG')=\prod_{e\in\mE'} \ell_{\kappa}(e).$$

Let $\mG_{\kappa}=\mG_{\kappa}^0\cup \mG_{\kappa}^1\cup \ldots\cup \mG_{\kappa}^d$ be the partitioning of $\mG$ into its connected components $\mG^i_{\kappa}=(\mN^i,\mE_{\kappa}^i)$, such that $\mG_{\kappa}^0$ is the component containing the node $*$. 
The component $\mG_{\kappa}^0$ consists of only the node $*$ if all edges of $\mG_{\kappa}$ are between two non-interacting species. Since all species of the network are in at least one reaction (by assumption), all non-interacting species nodes will be connected to at least one other node, potentially $*$. Therefore a connected component cannot consist of only one non-interacting species.  For each connected component $\mG_{\kappa}^i$, $i=1,\ldots,d$, there is a corresponding first linear integral $\lambda_i$ (as in Lemma \ref{lem:compstoich}) with coefficient one for the entries corresponding to the nodes $V\in\mG^i_{\kappa}$ and zero otherwise \cite{Fel_elim}. 
Hence $d\leq k$ in Lemma~\ref{lem:compartment}(c) and our assumption that all linear forms are induced by stoichiometry imposes $d=k$. 
Let $\alpha_i=\sum_{j=1}^P \lambda_{ij} z_j$ be the conserved amount.

Furthermore, let $\Theta_{\kappa,i}(V)$, $i=0,1,\ldots,d$, be the set of spanning trees rooted at the node $V\in\mN^i$. 
To be precise,  the edges of a spanning tree are directed towards $V$, and there is precisely one outgoing edge for each $V'\in\mN^i$, except for the root $V$. Furthermore, the set of spanning trees which have positive labels when evaluated for $x\in \Omega_\kappa\cap\mathbb R^n_{\geq 0}$  is denoted as
$$\Theta_{\kappa,i}^+(V)=\{\tau\,|\,\tau \in\Theta_{\kappa,i}(N),\, \ell_\kappa(\tau)>0 \textrm{ in }\Omega_\kappa\cap\mathbb R^n_{\geq 0}\}.$$

Next we relate the blanket conditions to conditions on the graph. For convenience we consider the joint parameter space  of $\kappa$ and $\alpha$, and let $(\kappa,\alpha)=(\widehat\kappa,\widehat\alpha)+\epsilon(\kappa^*,\alpha^*)+\ldots$ with  $\widehat\kappa\in \Pi$, $\widehat\alpha\in \R^d_{\ge0}$ be a curve in the joint parameter space for $\epsilon\ge 0$. Consider the ODE system 
\begin{align*}
\dot x&=N_{11}v_1(x,\widehat\kappa)+K_{12}(x,\widehat\kappa)\widehat\alpha +\big(K_{11}(x,\widehat\kappa)-K_{12}(x,\widehat\kappa)W^\prime\big)z_{1:p},\\
\dot{z}_{1:p} &=\widetilde N_{21}v_1(x,\widehat\kappa)+\widetilde K_{22}(x,\widehat\kappa)\widehat\alpha+\big(\widetilde K_{21}(x,\widehat\kappa)-\widetilde K_{22}(x,\widehat\kappa)W^\prime\big) z_{1:p}, \nonumber
\end{align*}
or in the notation of \eqref{eq:ab1} and \eqref{eq:ab2},
$$\dot x= a_0(x)+A_0(x)z_{1:p},\qquad \dot{z}_{1:p}=b_0(x)+B_0(x)z_{1:p}.$$

The next lemma tells us that blanket condition (i) corresponds to the existence of at least one rooted spanning tree with positive label in each connected component, and the root must be $*$ for the component $\mathcal{G}_{\k}^0$.

\begin{lemma}\label{lem:spanning}
 $B_0(x)$ is invertible for a fixed $\widehat\kappa\in \Pi$ and $x\in \Omega_{\widehat\kappa}\cap\mathbb R^n_{\geq 0}$ if and only if $\Theta_{\widehat\kappa,0}^+(*)\not=\emptyset$ and  $\cup_{V\in\mN^i}\Theta_{\widehat\kappa,i}^+(V)\not=\emptyset$ for all $i=1,\ldots,d$. 
\end{lemma}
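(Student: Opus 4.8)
The plan is to reduce the invertibility of $B_0$ to a product of determinants indexed by the connected components of $\mathcal{G}_{\widehat\kappa}$, and then to identify each factor with a sum over spanning trees via the directed Matrix-Tree theorem. First I would exploit the fact that $K_2(x,\widehat\kappa)$ couples two coordinates $z_j,z_l$ only when $Z_j,Z_l$ lie in the same component, and that each form $\lambda_i$ (the rows of $W=(W'\mid I_k)$) is supported on a single component $\mathcal{G}^i_{\widehat\kappa}$ with $i\ge 1$. Reordering the $z_j$ by components therefore makes $B_0=\widetilde K_{21}-\widetilde K_{22}W'$ block diagonal, with one block $B_0^{(i)}$ per component, so that $B_0$ is invertible if and only if every $B_0^{(i)}$ is. The component $\mathcal{G}^0_{\widehat\kappa}$ carries no first integral, so none of its coordinates is eliminated and $B_0^{(0)}$ is simply the principal block of $K_2$ on the nodes of $\mathcal{N}^0\setminus\{*\}$, while each $\mathcal{G}^i_{\widehat\kappa}$ with $i\ge1$ has exactly one coordinate eliminated through $\lambda_i$.

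Next I would write the block of $K_2$ attached to $\mathcal{G}^i_{\widehat\kappa}$ as $-\mathcal{L}^{(i)}$, where $\mathcal{L}^{(i)}$ is the weighted graph Laplacian of $\mathcal{G}^i_{\widehat\kappa}$ (diagonal out-degrees minus weighted adjacency, using the edge labels $\ell_{\widehat\kappa}$); the sign pattern guaranteed by Lemma~\ref{lem:compartment}(a) makes this identification exact, with the $*$-row and $*$-column deleted in the case $i=0$. The directed Matrix-Tree theorem then equates each principal cofactor $\det(\mathcal{L}^{(i)}_{\widehat V})$, obtained by deleting the row and column of $V$, with $\sum_{\tau\in\Theta_{\widehat\kappa,i}(V)}\ell_{\widehat\kappa}(\tau)$, the total label of the in-trees rooted at $V$; all these labels are nonnegative on $\Omega_{\widehat\kappa}\cap\R^n_{\ge0}$ because the rate functions are. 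For $\mathcal{G}^0_{\widehat\kappa}$ this yields $\det B_0^{(0)}=\pm\det(\mathcal{L}^{(0)}_{\widehat{*}})=\pm\sum_{\tau\in\Theta_{\widehat\kappa,0}(*)}\ell_{\widehat\kappa}(\tau)$, which is nonzero precisely when some summand is positive, i.e.\ when $\Theta^+_{\widehat\kappa,0}(*)\neq\emptyset$.

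For a component $\mathcal{G}^i_{\widehat\kappa}$ with $i\ge1$ the relevant observation is that $\lambda_i=\sum_{Z_j\in\mathcal{G}^i_{\widehat\kappa}}z_j$ is a first integral, so the columns of the block $K_2^{(i)}$ sum to zero. Replacing the eliminated coordinate by $s_i:=\sum_{Z_j\in\mathcal{G}^i_{\widehat\kappa}}z_j$ conjugates $K_2^{(i)}$ to a block-triangular matrix whose diagonal blocks are $B_0^{(i)}$ and the scalar $0$ (the $s_i$-direction, on which the linear part vanishes). Thus the spectrum of $K_2^{(i)}$ is that of $B_0^{(i)}$ together with one additional eigenvalue $0$, and $B_0^{(i)}$ is invertible if and only if $0$ is a simple eigenvalue of $K_2^{(i)}$, equivalently $\rank K_2^{(i)}=|\mathcal{N}^i|-1$ (algebraic and geometric multiplicities of $0$ coinciding by Lemma~\ref{lem:compartment}(b)). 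Since $\mathcal{L}^{(i)}=-K_2^{(i)}$ has nonnegative off-diagonal entries and zero column sums, this maximal rank is equivalent to some principal cofactor $\det(\mathcal{L}^{(i)}_{\widehat V})$ being nonzero, hence positive, which by the Matrix-Tree identity means exactly $\bigcup_{V\in\mathcal{N}^i}\Theta^+_{\widehat\kappa,i}(V)\neq\emptyset$. Assembling the factors through $\det B_0=\prod_{i=0}^d\det B_0^{(i)}$ gives the stated equivalence.

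The step I expect to be the main obstacle is the Matrix-Tree bookkeeping: fixing the orientation convention so that the paper's in-trees (edges directed toward the root) correspond to cofactors of the out-degree Laplacian, and, for the conservative components $i\ge1$, justifying the equivalence between $\rank K_2^{(i)}=|\mathcal{N}^i|-1$ and the nonvanishing of a \emph{principal} cofactor. The latter rests on the fact that, for a Laplacian with nonnegative off-diagonal entries, maximal rank forces a unique terminal strongly connected component reachable from every node, which in turn produces a positively labelled in-tree; this is precisely the graph-theoretic content already used for steady states in \cite{Fel_elim,saez_reduction}.
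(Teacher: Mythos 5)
Your argument is correct and follows exactly the route the paper takes: the paper's proof of this lemma simply defers to \cite{Fel_elim,saez_reduction}, noting that the equivalence is proved there via the Matrix-Tree theorem, and your component-wise block decomposition of $B_0$, the identification of each block with a (grounded or reduced) weighted Laplacian, and the cofactor/spanning-tree identity are precisely the content of those cited proofs. The one step you flag as delicate --- that for a conservative component maximal rank of $K_2^{(i)}$ forces a nonzero \emph{principal} cofactor --- also follows quickly from the adjugate: since the left kernel of $-K_2^{(i)}$ is spanned by the all-ones vector, every row of the adjugate is a multiple of $(1,\dots,1)$, so the adjugate is nonzero iff some diagonal entry (principal cofactor) is nonzero.
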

\begin{proof}
We have $B_0(x)=\widetilde K_{21}(x,\widehat\kappa)-\widetilde K_{22}(x,\widehat\kappa)W^\prime$. It is shown in \cite{Fel_elim,saez_reduction} (with a proof based on the Matrix-Tree theorem) that invertibility is equivalent to the condition of the lemma. 
\end{proof}

If at least one of the sets of spanning trees described in the lemma is empty, then there are additional conservation relations among non-interacting species, as in Example \ref{ex:1}, or $\widehat{\k}$ is such that some reactions have vanishing rate, and hence they are not present in practice. 

\medskip
We now proceed to address blanket condition (ii). To this end, we need to introduce some extra objects. 
Let $\sigma$ be a cycle of the graph $\mG$, say in the connected component $\mG_i$, $e$ an edge of $\sigma$, and define
$$\Gamma(\sigma)=\{\tau\cup \sigma\,|\,   \tau\in \Theta_{\widehat\kappa,i}(\textrm{source of }e) \textrm{ and }\sigma\setminus e\,\,\text{is a subgraph of}\,\, \tau\},$$ 
where $\cup$ and $\setminus$ are applied to both node set and edge set.
That is, $\Gamma(\sigma)$ consists of spanning trees that, after the addition of an edge, contain the cycle $\sigma$. 
It is shown in \cite{saez_reduction} that $\Gamma(\sigma)$ does not depend on the choice of $e$. We consider now the set  $\Sigma$ of the cycles $\sigma$ of $\mG_\k$ such that $\Gamma(\sigma)\not=\emptyset$ and further the sum of the columns of the stoichiometric matrix $N$ corresponding to the reactions in the cycle does not vanish on the $x$-coordinates. That is, if $\zeta_\sigma\in \R^n$ denotes the projection onto $\R^n$ of the  sum of the reaction vectors of the reaction in $\sigma$, the cycle $\sigma$ belongs to $\Sigma$ if and only if 
$$\zeta_\sigma\not=0\quad\text{and}\quad \Gamma(\sigma)\not=\emptyset.$$
 The first condition means that the net production of the non-interacting species is non-zero in the reaction path composed of the reactions in  the cycle.
A cycle consisting of two reactions forming one reversible reaction never satisfies this condition as the sum would be zero. 
We let $\Sigma_0,\Sigma_1,\dots,\Sigma_d$ denote the respective subsets in each connected component of $\mathcal{G}_\k$.

Let $I\subseteq\{1,\ldots,m_1\}$ be the set of indices of the reactions that do not involve any non-interacting species, that is, $\sum_{j=1}^P \delta_{ij}=\sum_{j=1}^P \delta'_{ij}=0$ for $i\in I$.
For fixed $\k,\alpha$, and under blanket condition (i), it  is shown  in \cite{saez_reduction} that  
in $\Omega$ the following equality holds
\begin{equation}\label{eq:h}
a_0(x)-A_0(x)B_0(x)^{-1}b_0(x)=\sum_{i\in I} v_{1}(x,\k)_i \xi_i + \sum_{i=0}^{d}  \frac{\alpha_{i}}{q_{i}(x,\kappa)} \sum_{\sigma\in\Sigma_i} \left( \sum_{\gamma\in \Gamma(\sigma)}  \ell_\k (\gamma) \right) \zeta_\sigma,
\end{equation}
where 
\begin{itemize}
\item $\alpha_0=1$ for convenience,
\item $\xi_i\in \R^n$ is the vector with entries $\xi_{ij}=\beta'_{ij}-\beta_{ij}$ (the net production of the species  in $\mX$ in reaction $r_i$),  
\item $\ell_\k(\gamma)$ is the label of the subgraph $\gamma$ of $\Gamma(\sigma)$, and has $\ell_\k(\sigma)$ as a factor,
\item 
the function $q_{i}(x,\k)$ is positive if blanket condition (i) is satisfied. In particular, it is the sum of the labels of the trees in $\Theta_{\widehat\kappa,0}^+(*)$ for $i=0$ and of the labels of the trees in  $\cup_{V\in\mN^i}\Theta_{\widehat\kappa,i}^+(V)$ for all $i=1,\ldots,d$ (c.f. Lemma~\ref{lem:spanning}). 
\end{itemize}
We remark that in \cite{saez_reduction}, it is assumed the parameter $\widehat\kappa$ is positive, but this is not necessary as long as  blanket condition (i) holds.

Using equality \eqref{eq:h}, we see that blanket condition (ii) holds if and only if the right hand side of \eqref{eq:h} vanishes. 
In the next lemma we obtain a sufficient condition for this to occur. 

\begin{lemma}\label{lem:cycle}
Assume blanket condition (i) is satisfied for a fixed  $\widehat\kappa\in\Pi$, and let $\widehat\alpha\in\R^d_{\ge 0}$.
A sufficient condition for  blanket condition (ii) to be satisfied is that:
\begin{itemize}
\item[(a)]  $v_{1}(x,\widehat\k)_i=0$ for all $i\in I$, 
\item[(b)] $\sum_{\gamma\in \Gamma(\sigma)}  \ell_\k (\gamma) =0$ for all cycles $\sigma\in\Sigma_0$  of $\mG^0_{\k}$,
\item[(c)] $\widehat\alpha_i \left( \sum_{\gamma\in \Gamma(\sigma)}  \ell_\k (\gamma) \right)=0$ for all cycles $\sigma\in\Sigma_i$  of $\mG^i_{\k}$, $i=1,\ldots,k$.
\end{itemize}
These conditions are necessary if the vectors $\xi_i$ for $i\in I$ and $\zeta_\sigma$ for all $\sigma\in \Sigma$ are linearly independent. 

\smallskip
Sufficient conditions for (b) and (c) to hold are 
\begin{itemize}
\item $\ell_{\widehat{\k}}(\sigma)=0$ if $\sigma\in\Sigma_0$ is a cycle of $\mG^0_{\k}$,
\item $\widehat\alpha_i \ell_{\widehat{\k}}(\sigma)=0$ if $\sigma\in\Sigma_i$ is a cycle of $\mG^i_{\k}$, $i=1,\ldots,k$.
\end{itemize}
\end{lemma}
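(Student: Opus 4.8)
The plan is to read everything off the structural identity \eqref{eq:h}, which (imported from \cite{saez_reduction}) already expresses the quantity $a_0(x)-A_0(x)B_0(x)^{-1}b_0(x)$ governing blanket condition (ii) as an explicit linear combination of the fixed vectors $\xi_i$ ($i\in I$) and $\zeta_\sigma$ ($\sigma\in\Sigma$). Since blanket condition (ii) is exactly the vanishing of the left-hand side of \eqref{eq:h}, taken with $\kappa=\widehat\kappa$ and $\alpha=\widehat\alpha$, the whole lemma reduces to deciding when the right-hand side of \eqref{eq:h} vanishes as a function of $x$ on $\Omega_{\widehat\kappa}\cap\mathbb R^n_{\geq 0}$. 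Throughout I use that blanket condition (i) guarantees $q_i(x,\widehat\kappa)>0$, so the denominators never cause trouble and may be cleared.

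For sufficiency I treat the three groups of terms separately. Condition (a) annihilates the first sum $\sum_{i\in I}v_1(x,\widehat\kappa)_i\xi_i$ termwise. In the second sum the $i=0$ contribution carries the factor $\alpha_0=1$, and condition (b) forces each inner sum $\sum_{\gamma\in\Gamma(\sigma)}\ell_\kappa(\gamma)$ over $\sigma\in\Sigma_0$ to be zero, while the contributions $i=1,\dots,d$ are killed by condition (c), which asserts precisely $\widehat\alpha_i\sum_{\gamma\in\Gamma(\sigma)}\ell_\kappa(\gamma)=0$ for $\sigma\in\Sigma_i$. Hence the entire right-hand side of \eqref{eq:h} vanishes and blanket condition (ii) holds.

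For necessity under the linear-independence hypothesis, I view the right-hand side of \eqref{eq:h} as $\sum_{i\in I}v_1(x,\widehat\kappa)_i\xi_i+\sum_{\sigma}d_\sigma(x)\zeta_\sigma$, where $d_\sigma(x)=\frac{\widehat\alpha_{i}}{q_{i}(x,\widehat\kappa)}\sum_{\gamma\in\Gamma(\sigma)}\ell_\kappa(\gamma)$ for the index $i$ of the component containing $\sigma$. Because the collection $\{\xi_i\}\cup\{\zeta_\sigma\}$ is linearly independent as a set of constant vectors, vanishing of this combination for every $x$ forces each (a priori $x$-dependent) coefficient to vanish identically in $x$: evaluating at each fixed $x$ kills the coefficients, so they vanish as functions. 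The $\xi_i$-coefficient then yields (a), and positivity of $q_i$ lets me clear the denominator in $d_\sigma$, giving $\widehat\alpha_i\sum_{\gamma\in\Gamma(\sigma)}\ell_\kappa(\gamma)\equiv 0$, which is (b) for $i=0$ (since $\alpha_0=1$) and (c) for $i\geq 1$.

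Finally, the stated sufficient conditions for (b) and (c) follow from the remark preceding the lemma that every $\ell_\kappa(\gamma)$ with $\gamma\in\Gamma(\sigma)$ has $\ell_\kappa(\sigma)$ as a factor, say $\ell_\kappa(\gamma)=\ell_\kappa(\sigma)\,m_\gamma$. Thus $\ell_{\widehat\kappa}(\sigma)=0$ immediately gives $\sum_{\gamma}\ell_{\widehat\kappa}(\gamma)=0$, which is (b); and $\widehat\alpha_i\ell_{\widehat\kappa}(\sigma)=0$ gives $\widehat\alpha_i\sum_{\gamma}\ell_{\widehat\kappa}(\gamma)=\sum_{\gamma}\big(\widehat\alpha_i\ell_{\widehat\kappa}(\sigma)\big)m_\gamma=0$, which is (c). Since \eqref{eq:h} carries all the structural weight, no step is genuinely hard; the only point requiring care is the necessity direction, where one must argue that linear independence of the constant vectors forces the $x$-dependent coefficients to vanish identically (not merely pointwise at isolated $x$), and that the strictly positive factors $q_i$ may be discarded.
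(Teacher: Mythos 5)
Your proof is correct and follows essentially the same route as the paper's: both read sufficiency off the identity \eqref{eq:h} term by term, obtain the last two bullet points from the fact that each $\ell_{\widehat\kappa}(\gamma)$ with $\gamma\in\Gamma(\sigma)$ has $\ell_{\widehat\kappa}(\sigma)$ as a factor, and (implicitly in the paper, explicitly in your write-up) derive necessity from linear independence of the constant vectors $\xi_i$ and $\zeta_\sigma$ together with positivity of the $q_i$. Your version merely spells out the necessity direction that the paper leaves as an immediate consequence of the form of \eqref{eq:h}.
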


\begin{proof}
The statement is a consequence of the form of the right hand side of \eqref{eq:h} 
as all terms vanish under the conditions of the lemma. For the second part, we note that  $\ell_{\widehat\k}(\gamma)$ is a multiple of $\ell_{\widehat\k}(\sigma)$. 
 \end{proof}

In the special case of mass-action kinetics, or kinetics for which each of the rate functions is multiplied by one of the parameters,  the first two conditions in Lemma~\ref{lem:cycle} hold if the corresponding parameters  are set to zero. Specifically, we obtain the following corollary, which is a consequence of Proposition~\ref{lem:consequences} and Lemmas~\ref{lem:spanning} and \ref{lem:cycle}.

\begin{corollary}\label{cor:QSS}
Assume $\Pi=\Pi_1\times \Pi_2\subseteq \R^{m}_{\geq 0} \times \R^{q-m}$ such that 
\begin{align*}
v_1(x,\k)_i &= \k_i u_1(x,\k')_i &&\textrm{for all }i=1,\dots,m_1\\ 
\nu_2(x,\k)_i & =\k_i u_2(x,\k')_i && \textrm{for all }i=m_1+1,\dots,m,
\end{align*}
with $u_1,u_2$ functions of $x$ and $\k'\in \Pi_2$ taking only positive values. Assume further  that   $\Theta_{\kappa,0}(*)\not=\emptyset$ and  $\cup_{V\in\mN^i}\Theta_{\kappa,i}(V)\not=\emptyset$ for all $i=1,\ldots,d$ (whether these hold does not depend on $\k$).
\begin{itemize}
\item
Let $\widehat\k\in \Pi_1\times \Pi_2$ such that $\widehat\kappa_{1:m_1}=0$, $\widehat\k_{i}>0$ for $i=m_1+1,\dots,m$, and let $\widehat\alpha=0$. Then blanket conditions (i) and (ii) are satisfied and furthermore, $w(x)=0$ and all eigenvalues of $\Delta(x)=B_0(x)$ have negative real part for $x\in \Omega_\kappa\cap\mathbb R^n_{\geq 0}$. Consequently, a Tikhonov-Fenichel reduction exists and agrees with the QSS reduction.
\item 
In particular,  there is a choice of parameters for which the QSS reduction can be seen as a Tikhonov-Fenichel reduction of the original system. 
\end{itemize}
\end{corollary}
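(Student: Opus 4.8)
The plan is to check, at the prescribed point $\widehat\k$ with $\widehat\k_{1:m_1}=0$ and $\widehat\k_i>0$ for $i>m_1$ together with $\widehat\alpha=0$, that both blanket conditions hold, that $w=0$, and that $\Delta=B_0$ satisfies the eigenvalue hypothesis of Lemma~\ref{lem:lessuglylemma}(b); the agreement of the two reductions is then read off from the special-case propositions. All the content sits in the first bullet: once it is established, the second bullet is merely the remark that we have exhibited one admissible pair $(\widehat\k,\widehat\alpha)$, so at least one such pair exists.

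First I would secure blanket condition (i), that is, invertibility of $B_0(x)=\widetilde K_{21}(x,\widehat\k)-\widetilde K_{22}(x,\widehat\k)W'$ on $\Omega_{\widehat\k}\cap\R^n_{>0}$, through Lemma~\ref{lem:spanning}. The task is to upgrade the purely combinatorial hypotheses $\Theta_{\k,0}(*)\neq\emptyset$ and $\cup_V\Theta_{\k,i}(V)\neq\emptyset$ to their label-positive versions $\Theta^+_{\widehat\k,0}(*)$ and $\cup_V\Theta^+_{\widehat\k,i}(V)$ at this particular $\widehat\k$. Here the edges $Z_j\to Z_k$ and $Z_j\to *$ carry labels $\nu_2(\cdot,\widehat\k)_i=\widehat\k_i\,u_2(\cdot,\widehat\k')_i>0$ (using $\widehat\k_i>0$ for $i>m_1$ and $u_2>0$), whereas setting $\widehat\k_{1:m_1}=0$ makes only the influx labels on the edges $*\to Z_k$ vanish. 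The key observation is that the rooted spanning trees counted by $\Theta^+$ never use such a zero-label edge: a tree rooted at $*$ in $\mG^0_{\widehat\k}$ has no edge leaving $*$, and the components $\mG^i_{\widehat\k}$ with $i\geq1$ contain no node $*$ at all. Hence $\Theta^+=\Theta\neq\emptyset$ in every case, Lemma~\ref{lem:spanning} gives invertibility of $B_0$ (equivalently $K_2$ has rank $p$), and Proposition~\ref{lem:consequences}(a) (equivalently Lemma~\ref{lem:compartment}(b),(c)) yields that all eigenvalues of $B_0$ lie in the open left half plane.

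Next I would treat blanket condition (ii) and the accompanying identities. Since $v_1(x,\widehat\k)_i=\widehat\k_i\,u_1(x,\widehat\k')_i=0$ for every $i\le m_1$, the whole vector $v_1(x,\widehat\k)$ vanishes, and with $\widehat\alpha=0$ formulas \eqref{eq:ab1} and \eqref{eq:ab2} give $a_0\equiv0$ and $b_0\equiv0$. This is exactly the hypothesis of Proposition~\ref{lem:consequences}(b), so blanket condition (ii) holds, $w=B_0^{-1}b_0=0$, $Dw=0$, $M=I_p$, and therefore $\Delta=B_0$. (Alternatively, one verifies the graph conditions of Lemma~\ref{lem:cycle}: (a) holds as $v_1(\cdot,\widehat\k)=0$; (c) holds as $\widehat\alpha_i=0$; and (b) holds because every spanning tree entering $\Gamma(\sigma)$ for a cycle $\sigma\in\Sigma_0$ of $\mG^0_{\widehat\k}$ must attach the source $*$ to the rest of the tree through an outgoing edge $*\to Z_k$ of vanishing label, so each $\ell_{\widehat\k}(\gamma)=0$ and the $i=0$ sum in \eqref{eq:h} collapses.) Because $\Delta=B_0$ has all eigenvalues in the open left half plane on the non-empty open set found above, $\widetilde\Omega_{\widehat\k}\neq\emptyset$ and Lemma~\ref{lem:lessuglylemma}(b) delivers a local Tikhonov--Fenichel reduction with linearly attractive critical manifold $Y_{\widehat\k}$. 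As $w$ is constant, Corollary~\ref{qsscor} (or Propositions~\ref{zeezeroprop} and~\ref{qssprop}) gives agreement of the singular perturbation reduction with the classical QSS reduction up to order $\varepsilon$, completing the first bullet; the second bullet follows immediately.

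The step I expect to require the most care is the invertibility argument for $B_0$, because $\widehat\k_{1:m_1}=0$ places $\widehat\k$ on the boundary of $\Pi$ and one must be sure that this neither empties the spanning-tree sets used for invertibility nor spoils the admissibility of $\widehat\k$ as a Tikhonov--Fenichel parameter value. The first concern is resolved by the observation that only the $*\to Z_k$ labels drop to zero and that these edges play no role in the trees counted by $\Theta^+$; the second is guaranteed by the standing assumption that every boundary point of $\Pi$ is the endpoint of an interior curve, which is precisely what the perturbation expansion \eqref{epsilonsystem} uses.
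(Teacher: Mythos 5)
Your proposal is correct and follows essentially the same route as the paper: blanket condition (i) via Lemma~\ref{lem:spanning} together with the observation that the relevant rooted spanning trees use only edges with source in $\mZ$ (whose labels $\widehat\k_i u_2(\cdot,\widehat\k')_i$ remain positive), and then Proposition~\ref{lem:consequences} and Corollary~\ref{qsscor} for existence and agreement of the reductions. Your primary route to blanket condition (ii) — reading $a_0\equiv b_0\equiv 0$ directly off \eqref{eq:ab1}--\eqref{eq:ab2} and invoking Proposition~\ref{lem:consequences}(b) — is a mild shortcut past the paper's verification of Lemma~\ref{lem:cycle}(b) via the cycle/$\Gamma(\sigma)$ argument, but you reproduce that argument correctly in your parenthetical, so the two proofs coincide in substance.
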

\begin{proof}
Any tree in  $\Theta_{\widehat\kappa,i}(V)$, for any $i,V$ with $V=*$ if $i=0$, has only edges with source an element in $Z$.
These edges have label of the form $\widehat\k_j u_2(x,\widehat\k')_j $ with $\widehat\k' \in  \Pi_2$, which by assumption is strictly positive. 
Hence, any spanning tree in the relevant sets $\Theta_{\widehat\kappa,i}(V)$ has positive label. By Lemma~\ref{lem:spanning}, blanket condition (i) holds.

Conditions (a) and (c) from Lemma~\ref{lem:cycle} hold trivially. Consider now a cycle $\sigma\in \Sigma_0$. If the cycle contains the node $*$,  then the label of the edge with source $*$ is zero, and hence $\ell_{\widehat\k}(\sigma)=0$. If $*$ is not in the cycle, then consider any subgraph $\gamma \in \Gamma(\sigma)\neq \emptyset$. This subgraph contains a spanning tree with root a node of the cycle. Hence, it must contain an edge with source $*$, which has zero label, implying that $\ell_{\widehat{\k}}(\gamma)=0$ for any $\gamma \in \Gamma(\sigma)$. It follows that condition (b) of Lemma~\ref{lem:cycle} is satisfied as well, and hence blanket condition (ii) holds.
\end{proof}

A nice consequence of the Corollary is that if all reactions involve some species in $Z$ in the reactant, then $\widehat{\alpha}=0$ defines a TFPV, regardless of the (positive) values of the reaction rate constants. 

Note that if $\mathcal{G}_\k^0$ has only the node $*$, then $b_0=0$ and hence $w=0$. 

\begin{remark}\label{rk:intermediates}
A special scenario occurs for so-called \emph{intermediate species} \cite{feliu:intermediates}: these are species that do not interact with any other species, and are the reactant and the product of at least one reaction. The set of these species is obviously a set of non-interacting species. With mass-action kinetics, $K_2(x,\k)$ has full rank and hence there are no linear first integrals in their concentrations. In particular,
\[ b_0(x)=N_{21} v_1(x,\k),\qquad B_0(x)= K_2(x,\k), \]
and $B_0(x)$ is constant in $x$. Hence, if blanket conditions (i) and (ii) are satisfied, we need only to choose $\widehat\k$ such that the rate of any reaction producing an intermediate is constant in order to obtain a valid Tikhonov-Fenichel reduction which further agrees with the QSS reduction (up to irrelevant terms of higher order in $\epsilon$). 

Let us look at this scenario in more detail. 
By the condition on the production and degradation of all intermediates, the graph $\mathcal{G}_\k$ has one connected component, namely that of $*$, which necessarily has a spanning tree rooted at $*$. 
The label of the spanning tree may be zero depending on $\widehat\kappa$. Hence blanket condition (i) is satisfied if and only if there is a  directed path  from any intermediate species to $*$ with positive label.

The cycles of $\mG_\k$ are of two kinds.  A cycle is not in $\Sigma$ if it  does not go through $*$,  because the reactions corresponding to the cycle only involve non-interacting species. If a cycle goes through $*$, then it contains an edge of the form
$*\ce{->} Z.$
By setting the reaction rate constant of all reactions of this form to zero, we are guaranteed that \eqref{eq:h} is zero, that is, blanket condition (ii) holds. 
This straightforwardly implies that $b_0(x)=0$, hence also $w=0$. Hence by Corollary \ref{cor:QSS} there exists a Tikhonov-Fenichel reduction and it agrees with the QSS reduction. By Proposition \ref{zeezeroprop} the reduced system is
$$\frac{dx}{d\tau}=a_1(x)-A_0B^{-1}_0b_1(x).$$
By the nature of the reactions, the matrices $A_0$  and $B_0$ are constant in the concentrations $x$.
\end{remark}

\medskip

Before moving to the discussion of realistic examples in the next section, we provide an illustrative example to show that the conditions in Lemma \ref{lem:cycle} are sufficient but not necessary. 

\begin{example}%\label{ex:cycle}
Consider the (artificial) reaction network
\begin{equation*}
X_1+Z_1\ce{->[\kappa_1]}  2X_1, \quad X_1\ce{->[\kappa_2]}  2X_1+Z_1, \quad X_1+Z_1\ce{->[\kappa_3]} 0, \quad X_1\ce{->[\kappa_4]} Z_1
\end{equation*}
with $\mZ=\{Z_1\}$, $\mX=\{X_1\}$, and assuming mass-action kinetics. The graph $\mathcal{G}_\k$ for $\widehat\kappa$ is
\begin{center}
\begin{tikzpicture}[inner sep=1.2pt]
\node (Z) at (0,0) {$Z_1$};
\node (*) at (3,0) {$*$};
\draw[->] (Z) to[out=10,in=170] node[above,sloped]{\footnotesize $\widehat\kappa_1x_1$} (*);
\draw[->] (Z) to[out=50,in=130] node[above,sloped]{\footnotesize $\widehat\kappa_3x_1$} (*);
\draw[->] (*) to[out=-170,in=-10] node[below,sloped]{\footnotesize $\widehat\kappa_2x_1$} (Z);
\draw[->] (*) to[out=-130,in=-50] node[below,sloped]{\footnotesize $\widehat\kappa_4x_1$} (Z);
\end{tikzpicture}
\end{center}
It has exactly two spanning trees rooted at $*$, namely $Z_1\ce{->[\widehat\kappa_1x_1]}*$ and $Z_1\ce{->[\widehat\kappa_3x_1]}*$, so either of these two coefficients must be non-zero for blanket condition (i) to be fulfilled, see Lemma \ref{lem:spanning}. 
There are four possible cycles, but only two are in $\Sigma$, namely 
$$\sigma_1\colon\quad Z_1\ce{->[\widehat\kappa_1x_1]}*\ce{->[\widehat\kappa_{2}x_1]}Z_1,\qquad \sigma_2\colon  \quad Z_1\ce{->[\widehat\kappa_3x_1]}*\ce{->[\widehat\kappa_{4}x_1]}Z_1.$$
For each cycle $\sigma_i$, $\Gamma(\sigma_i)$ contains only $\sigma_i$. Furthermore, $\zeta_{\sigma_1}= 2$ and $\zeta_{\sigma_2}=-2$. 
The other two cycles have net production of $X_1$ equal to zero. We have $I=\emptyset$ and the function on the right side of \eqref{eq:h} is
\[ \frac{1}{q(x_1,\widehat\kappa)}\big(2 \widehat\kappa_1\widehat\kappa_2 x_1^2-2\widehat\kappa_3\widehat\kappa_4 x_1^2 \big) = \frac{2x_1^2}{q(x_1,\widehat\kappa)}\big( \widehat\kappa_1\widehat\kappa_2 -\widehat\kappa_3\widehat\kappa_4 \big) ,
\]
where $q(x_1,\widehat\kappa)$ is positive if blanket condition (i) holds, that is, if at least one of $\widehat\k_1,\widehat\k_3$ are positive. 
 
Blanket condition (ii) is fulfilled by choosing for example $\widehat\kappa_{2}=\widehat\kappa_{4}=0$ according to Lemma \ref{lem:cycle}. However, it is clear that the function also vanishes if $\widehat\kappa_1\widehat\kappa_2-\widehat\kappa_3\widehat\kappa_4=0$. This implies the conditions in Lemma \ref{lem:cycle} are only sufficient and not necessary.  

To complete the example, we note that
\begin{align*}
a_0(x_1) &= (\widehat\kappa_2 - \widehat\kappa_4) x_1,  &  A_0(x_1)= \phantom{-}(\widehat\kappa_1 - \widehat\kappa_3) x_1,\\
b_0(x_1) & = ( \widehat\kappa_2+ \widehat\kappa_4) x_1 & B_0(x_1)=  -(\widehat\kappa_1 + \widehat\kappa_3) x_1,
\end{align*} 
hence 
$$w(x_1)=B_0(x_1)^{-1}b_0(x_1)=-\frac{\widehat\kappa_2+\widehat\kappa_4}{\widehat\kappa_1+\widehat\kappa_3},\qquad \Delta(x_1)=B_0(x_1)=-(\widehat\kappa_1 + \widehat\kappa_3) x_1,$$
and a Tikhonov-Fenichel reduction exists according to Lemma \ref{lem:lessuglylemma}(b). Finally, according to Corollary \ref{qsscor}, the Tikhonov-Fenichel reduction and the QSS reduction agree since $w$ is constant.
\end{example}

%%%%%%%%%%%%%%%%%%%%%%%%%%%%%%%%%%%%%%%%%%%%%%%%%%%%%%%%%%%%%%%%%%%%%%%
\section{Examples and applications}

\subsection{The Michaelis-Menten mechanism}
\label{sec:MM}

For the purpose of illustration, we will discuss the standard enzyme-substrate mechanism for some choices of non-interacting sets with mass-action kinetics. (Note that all possible QSS and Tikhonov-Fenichel reductions of this system are discussed in \cite{gwz,gwz3}.) The mechanism is
\begin{equation*}
E+S\ce{<=>[\kappa_1][\kappa_{2}]}  C\ce{->[\kappa_3]} E+P.
\end{equation*}
 There are two linear first integrals, which are given by the stoichiometry, namely $x_E+x_C$ and $x_C+x_S+x_P$.  The associated ODE system is
\begin{align*}
\dot{x}_E &= -\k_1 x_E x_S + (\k_2+\k_3) x_C \\
\dot{x}_C &= \k_1 x_E x_S - (\k_2+\k_3) x_C \\
\dot{x}_S &= -\k_1 x_E x_S + \k_2 x_C \\
\dot{x}_P &= \k_3 x_C.
\end{align*}
The domain $\Omega_{\widehat\kappa}$ depends on the choice of parameters, but one will always have $\mathbb R^n_{>0}\subseteq\Omega_{\widehat\kappa}$.

\paragraph{\bf Case $\mZ=\{S\}$.} The non-interacting graph for a parameter value $\widehat\kappa$ is
$$S\ce{<=>[\widehat\kappa_1x_E][\widehat\kappa_{2}x_C]}*.$$
There is one rooted spanning tree at $*$, namely $S\ce{->[\widehat\kappa_1x_E]}*$, and by Lemma \ref{lem:spanning}, blanket condition (i) holds if and only if  $\widehat\kappa_1>0$.
For   blanket condition (ii)  we consider the cycles of the graph. There is only one cycle 
$$S\ce{->[\widehat\kappa_1]}*\ce{->[\widehat\kappa_{2}]}S$$
 which is not in $\Sigma$. Here $I=\{3\}$ and the rate function of the third reaction is $\widehat\k_3 x_C$. Hence, by Lemma~\ref{lem:cycle}, blanket condition (ii) holds if and only if $\widehat\kappa_3=0$.

Now let $\widehat\k_1>0$ and $\widehat\k_3=0$. We verify the reducibility conditions from Lemma~\ref{lem:lessuglylemma}(b),
and we find 
$$B_0(x)=-\widehat\kappa_1 x_E,\quad w(x)=B_0(x)^{-1}b_0(x)=-\frac{\widehat\kappa_2 x_C}{\widehat\kappa_1 x_E}.$$
If $\widehat\kappa_2=0$, then  $\Delta(x)=B_0$, hence a Tikhonov-Fenichel reduction exists and agrees with QSS, according to Lemma \ref{lem:lessuglylemma}(b) and Corollary \ref{qsscor}. If $\widehat\kappa_2>0$, then
$$\Delta(x)=-\left(\widehat\kappa_1x_E+\frac{\widehat\kappa_2x_C}{x_E}+\widehat\kappa_2\right),$$
 and a Tikohnov-Fenichel reduction exists according to Lemma \ref{lem:lessuglylemma}(b); note that here $M\not=\Delta$. However, the reduction agrees with the QSS reduction only in a degenerate setting: By Proposition  \ref{zeezeroprop}, we find that for the two reductions to agree, identity \eqref{agreecond} must be satisfied. With
 $$A_0B_0^{-1}(B_1-b_1)=\begin{pmatrix} -1 \\1 \\0\end{pmatrix} \left(\frac{\widehat\kappa_2}{\widehat\kappa_1}\kappa_1^*-\kappa_2^*\right)x_C,\quad A_1w-a_1=\begin{pmatrix} -1 \\1 \\0\end{pmatrix}\left(\frac{\widehat\kappa_2}{\widehat\kappa_1}\kappa_1^*-\kappa_2^*\right)x_C +\begin{pmatrix} 1 \\-1 \\-1\end{pmatrix}\kappa_3^*x_C,$$
this holds if and only if $\kappa_3^*=0$.  However, this implies $\kappa_3= \widehat\kappa_3+\varepsilon^2\cdots$ for the curve in parameter space, hence the reduced system is trivial, providing no information. (Moreover, if one makes the obvious choice $\kappa_3=\widehat\kappa_3+\varepsilon\kappa_3^*$, then  last reaction of the mechanism does not occur at all.)

\paragraph{\bf Case $\mZ=\{E,C\}$.} The non-interacting graph for a given $\widehat\kappa$ is 
\begin{center}
\begin{tikzpicture}[inner sep=1.2pt]
\node (*) at (-1,0) {$*$};
\node (E) at (0,0) {$E$};
\node (C) at (3,0) {$C$};
\draw[->] (E) to node[above]{\footnotesize $\widehat\kappa_1x_S$} (C);
\draw[->] (C) to[out=145,in=35] node[above,sloped]{\footnotesize $\widehat\kappa_2$} (E);
\draw[->] (C) to[out=-155,in=-25] node[below,sloped]{\footnotesize $\widehat\kappa_3$} (E);
\end{tikzpicture}
\end{center}
with two connected components.
There are  three rooted spanning trees to consider, namely 
$$E\ce{->[\widehat\kappa_1x_S]} C,\quad C\ce{->[\widehat\kappa_{2}]} E,\quad C\ce{->[\widehat\kappa_3]} E,$$
so at least one of the parameters needs to be different from zero  in order to satisfy blanket condition (i) according to Lemma \ref{lem:spanning}. Furthermore, $I=\emptyset$ and there is only one cycle in $\Sigma$, viz.
$$\sigma\colon \quad E\ce{->[\widehat\kappa_1x_S]}C\ce{->[\widehat\kappa_3]} E.$$
The cycle belongs to the connected component with the linear first integral $x_E+x_C=\alpha_1$. 
The set $\Gamma(\sigma)$ contains only $\sigma$. By Lemma~\ref{lem:cycle}, 
in conjunction with blanket condition (i), blanket condition (ii) holds if and only if $\widehat\alpha_1 \widehat\kappa_1\widehat\kappa_3 x_S=0$. The case that $x_S$ vanishes identically may be dismissed, since it would amount to $\mZ=\{E,C,S\}$. We are left with the following scenarios:
\begin{itemize}
\item $\widehat\kappa_1>0$, and $\widehat\kappa_3=0$ or $\widehat\alpha_1=0$
\item $\widehat\kappa_2>0$, and $\widehat\kappa_1=0$ or $\widehat\kappa_3=0$ or $\widehat\alpha_1=0$
\item $\widehat\kappa_3>0$, and $\widehat\kappa_1=0$ or $\widehat\alpha_1=0$.
\end{itemize}
By substituting $x_C = \alpha_1 - x_E$, we are in the setting of \eqref{epsilonsystem} and \eqref{eq:ab2}, with
\[
x=\begin{pmatrix}x_S\\ x_P\end{pmatrix} \text{  and  } z=x_E,
\]
and
\begin{align*}
b_0 & =  (\widehat \k_2 + \widehat \k_3)\widehat\alpha_1, & B_0 &= - \widehat\k_1 x_S - (\widehat \k_2 + \widehat \k_3),
&a_0&=\widehat\alpha_1\begin{pmatrix} \widehat\k_2 \\  \widehat\k_3 \end{pmatrix},
 &A_0 &= \begin{pmatrix} -\widehat \k_1 x_S - \widehat\k_2 \\  -\widehat\k_3 \end{pmatrix}.
\end{align*}
Moreover 
\[
w(x)= \frac{- (\widehat \k_2 + \widehat \k_3)\widehat\alpha_1}{ \widehat\k_1 x_S + (\widehat \k_2 + \widehat \k_3)}.
\]
In the cases where $\widehat\alpha_1=0$ or $\widehat\k_2=\widehat\k_3=0$, we have  $b_0(x)=0$, so $\Delta=B_0$, and all eigenvalues lie on the left half plane. Consequently, a Tikohnov-Fenichel reduction exists and agrees with the QSS reduction.
When $\widehat\alpha_1\neq 0$ but $\widehat \k_1=0$, then $w(x)$ is constant and by Proposition~\ref{lem:consequences},  a Tikohnov-Fenichel reduction exists and agrees with the QSS reduction. 

The only scenario left to analyze is when $\widehat\k_3=0$ and the remaining parameters are positive, thus we have a curve $\varepsilon \kappa_3^*$ in parameter space. One easily checks that $\Delta(x)<0$ for all $x$. But  identity \eqref{agreecond} would imply (after some computation) that
\[
\k_3^*\left(\frac{\widehat \k_2 }{ \widehat\k_1 x_S + \widehat \k_2}-1\right)=0.
\]
This yields a contradiction unless $\k_3^*=0$, but the latter characterizes the degenerate case that the last reaction does not occur at all.

\paragraph{\bf The case $\mZ=\{P\}$.} This is a case where no Tikhonov-Fenichel reduction (and no QSS reduction) exists: The non-interacting graph for $\widehat\kappa$  is
$$*\ce{->[\widehat\kappa_3x_C]}P.$$
There is no spanning tree rooted at $*$, so blanket condition (i) cannot be satisfied.

%%%%%%%%%%%%%%%%%%%%%%%%%%%%%%%%%%%%%%%%%%%%%%%%
\subsection{A predator-prey system}

The following three dimensional predator-prey system was introduced and discussed in \cite{rosenzweig} in the course of a first-principle derivation of the two dimensional  Rosenzweig-MacArthur system:
\begin{align*}
\dot{x}_B &= \phantom{-}\kappa_1 x_B(1-x_B) -\kappa_2 x_Bx_H,\\
\dot{x}_S &= -\kappa_3 x_S+\kappa_4 x_Bx_H, \\
\dot{x}_H &= \phantom{-}\kappa_3x_S-\kappa_4 x_Bx_H+\kappa_5 x_S-\kappa_6 x_H,
\end{align*}
with non-negative parameters $\kappa_1,\ldots,\kappa_6$.  Here $x_B$ stands for the abundance of  species $B$, the prey while $x_S$ resp.\ $x_H$ are abundances of the species $S$ and $H$ (resting and hunting predators). The three dimensional system is obtained  from an individual based stochastic model (see \cite{rosenzweig}, Section 2), upon scaling the abundance of prey. 
All Tikhonov-Fenichel parameter values for dimension two, and all reductions, were determined in \cite{rosenzweig} and its supplementary material.
We investigate here what types of reductions arise by means of sets of non-interacting sets and Lemmas~\ref{lem:spanning} and \ref{lem:cycle}.

 The ODE may be considered to arise from reaction networks  with mass-action kinetics in different ways. We make a choice different from \cite{rosenzweig} and consider, for example,
\[B\ce{<=>[\kappa_1][\kappa_7]} 2B,\qquad B+H\ce{->[\kappa_2]}H,\qquad S\ce{->[\kappa_3]} H,\]
\[ B+H\ce{->[\kappa_4]}B+S,\qquad S\ce{->[\kappa_5]} S+H,\qquad H\ce{->[\kappa_6]} 0,\]
with $\k_1=\k_7$.   

The network has only two sets of non-interacting species, namely, $\mZ=\{H\}$ and $\mZ= \{S\}$. (The union of these two sets is not a non-interacting set, and neither is $\{B\}$. According to \cite{rosenzweig} there exist QSS reductions with respect to $B$, hence our approach will not retrieve all possible QSS reductions.)

 We will provide a brief analysis of the two different sets using Lemma \ref{lem:spanning} and Lemma \ref{lem:cycle}, and compare the results to the detailed analysis carried out in \cite{rosenzweig}.   In that paper it is shown that in the case $\widehat\kappa_1=0$, $\widehat\kappa_2\widehat\kappa_3\widehat\kappa_6=0$ is a necessary condition for the existence of a Tikhonov-Fenichel reduction.  The condition is further divided into 11 cases specifying precisely the parameters that are zero and those that are not in order to obtain validity.  We will discuss these cases from the perspective of Lemma \ref{lem:spanning} and Lemma \ref{lem:cycle}, which are used without further reference.

 \paragraph{The case $\mZ=\{S\}$.} The non-interacting graph is 
 \begin{center}
\begin{tikzpicture}[inner sep=1.2pt]
\node (S) at (0,0) {$S$};
\node (*) at (2,0) {$*$};
\draw[->] (S) to[out=30,in=150]  node[above,sloped]{\footnotesize $\widehat\kappa_3$} (*);
\draw[->] (*) to[out=-150,in=-30] node[below,sloped]{\footnotesize $\widehat\kappa_4x_Bx_H$} (S);
\draw[->] (S) .. controls (-1,-1) and (-1,1).. node[above,sloped]{\footnotesize $\widehat\kappa_5$} (S);
\end{tikzpicture}
\end{center}
There is one spanning tree rooted at $*$: $S\ce{->[\widehat\kappa_3]}*$, hence only $\widehat\kappa_3>0$ is required for blanket condition (i). Additionally, there is one cycle in $\Sigma$,  $\sigma\colon S\ce{->[\widehat\kappa_5]}S$ and
$I=\{1,2,6,7\}$. Hence assuming
\begin{itemize}
\item $\widehat\kappa_3>0$,  $\widehat\kappa_1=\widehat\kappa_2=\widehat\kappa_5=\widehat\kappa_6=0$
\end{itemize}
implies that blanket conditions (i) and (ii) are satisfied. If in addition $\widehat\kappa_4=0$, then Corollary \ref{cor:QSS} applies and a Tikhonov-Fenichel reduction exists and agrees with the QSS reduction. Whenever $\widehat\kappa_4>0$, subsection 4.3 of \cite{rosenzweig} shows that a Tikhonov-Fenichel reduction exists but is not in agreement with the QSS reduction. Here we have retrieved cases 5 and 9 of  \cite[Section 3.4.2]{rosenzweig}.

If we consider \eqref{eq:h}, then $\Gamma(\sigma)$ consists of the graph 
 \begin{center}
\begin{tikzpicture}[inner sep=1.2pt]
\node (S) at (0,0) {$S$};
\node (*) at (2,0) {$*$};
\draw[->] (*) to node[below,sloped]{\footnotesize $\widehat\kappa_4x_Bx_H$} (S);
\draw[->] (S) .. controls (-1,-1) and (-1,1).. node[above,sloped]{\footnotesize $\widehat\kappa_5$} (S);
\end{tikzpicture}
\end{center}
which has label $\widehat\kappa_5\widehat\kappa_4x_Bx_H$. This term vanishes when $\widehat{\k}_4=0$, which gives rise to the following new set of parameters satisfying blanket conditions (i) and (ii): 
\begin{itemize}
\item $\widehat\kappa_3>0$, $\widehat\kappa_5>0$, $\widehat\kappa_1=\widehat\kappa_2=\widehat\kappa_4=\widehat\kappa_6=0$.
\end{itemize}
In this case, $w(x)=0$ and hence a Tikohnov-Fenichel reduction exists and agrees with the QSS reduction; the supplementary material to  \cite{rosenzweig} shows that the reduced system is of Volterra-Lotka type.
This is case 8 from \cite[Section 3.4.2]{rosenzweig}.

 \paragraph{The case $\mZ=\{H\}$.} The non-interacting graph is 
 \begin{center}
\begin{tikzpicture}[inner sep=1.2pt]
\node (H) at (0,0) {$H$};
\node (*) at (4,0) {$*$};
\draw[->] (H) to[out=30,in=150]  node[below,sloped]{\footnotesize $\widehat\kappa_4x_B$} (*);
\draw[->] (H) to[out=-30,in=-150]  node[above,sloped]{\footnotesize $\widehat\kappa_6$} (*);
\draw[->] (*) to[out=-120,in=-60] node[below,sloped]{\footnotesize $\widehat\kappa_3x_S$} (H);
\draw[->] (*) to[out=120,in=60] node[above,sloped]{\footnotesize $\widehat\kappa_5x_S$} (H);
\draw[->] (H) .. controls (-1,-1) and (-1,1).. node[above,sloped]{\footnotesize $\widehat\kappa_2x_Bx_H$} (H);
\end{tikzpicture}
\end{center}
There are two spanning trees rooted at $*$: $H\ce{->[\widehat\kappa_4x_B]}*$ and $H\ce{->[\widehat\kappa_6]}*$, hence $\widehat\kappa_4>0$ or $\widehat\kappa_6>0$ are necessary and sufficient conditions for blanket condition (i).  Additionally, $I=\{1,7\}$ and there are three cycles in $\Sigma$:
$$\sigma_1\colon H\ce{->[\widehat\kappa_2x_B]}H,\qquad \sigma_2\colon H\ce{->[\widehat\kappa_6]}*\ce{->[\widehat\kappa_3x_S]} H,\qquad \sigma_3\colon  H\ce{->[\widehat\kappa_4x_B]}*\ce{->[\widehat\kappa_5x_S]} H,$$
hence, according to Lemma~\ref{lem:cycle}, the following possibilities guarantee that blanket conditions (i) and (ii) are satisfied:
\begin{itemize}
\item$\widehat\kappa_4>0$, $\widehat\kappa_1=\widehat\kappa_2=\widehat\kappa_5=\widehat\kappa_6=0$,
\item $\widehat\kappa_6>0$, $\widehat\kappa_1=\widehat\kappa_2=\widehat\kappa_3=\widehat\kappa_4=0$,
\item $\widehat\kappa_4>0$, $\widehat\kappa_6>0$, $\widehat\kappa_1=\widehat\kappa_2=\widehat\kappa_3=\widehat\kappa_5=0$.
\end{itemize}
If both $\widehat\kappa_3=\widehat\kappa_5=0$ (as in the third case), then Corollary \ref{cor:QSS} shows existence of a Tikhonov-Fenichel reduction and agreement with the QSS reduction.  If $\widehat\kappa_3>0$ in the first case or $\widehat\kappa_5>0$ in the second case, then a Tikhonov-Fenichel reduction is still obtained but it does not agree with the QSS reduction; see  \cite[Section 4.2]{rosenzweig}.  We have retrieved cases 5, 6, 7, 10 and 11 of  \cite[Section 3.4.2]{rosenzweig}.

If we consider the explicit form of \eqref{eq:h}, we find that  $\Gamma(\sigma_1)$ consists of two graphs:

\begin{minipage}[h]{0.45\textwidth}
 \begin{center}
\begin{tikzpicture}[inner sep=1.2pt]
\node (H) at (0,0) {$H$};
\node (*) at (2,0) {$*$};
\draw[->] (*) to node[above,sloped]{\footnotesize $\widehat\kappa_5x_S$} (H);
\draw[->] (H) .. controls (-1,-1) and (-1,1).. node[above,sloped]{\footnotesize $\widehat\kappa_2x_Bx_H$} (H);
\end{tikzpicture}
\end{center}
\end{minipage}
\begin{minipage}[h]{0.45\textwidth}
 \begin{center}
\begin{tikzpicture}[inner sep=1.2pt]
\node (H) at (0,0) {$H$};
\node (*) at (2,0) {$*$};
\draw[->] (*) to node[above,sloped]{\footnotesize $\widehat\kappa_3x_S$} (H);
\draw[->] (H) .. controls (-1,-1) and (-1,1).. node[above,sloped]{\footnotesize $\widehat\kappa_2x_Bx_H$} (H);
\end{tikzpicture}
\end{center}
\end{minipage}
\\
Furthermore, $\zeta_{\sigma_1}= (-1,0)^{tr}$. Hence, the term in \eqref{eq:h} corresponding to this cycle 
has numerator $-\widehat\k_2 x_B x_S (\widehat\k_3+\widehat\k_5)$ and is zero in the $x_S$ component. For the other two cycles, the sets $\Gamma(\sigma_i)$ contain only the cycle itself and when added together give the term with numerator
$(0,-\widehat\k_3 \widehat\k_6 x_S + \widehat \k_4 \widehat\k_5 x_S x_B)^{tr}$. 
The detailed analysis of the term arising from the cycle $\sigma_1$ gives the following cases for which blanket conditions (i) and (ii) hold:
\begin{itemize}
\item$\widehat\kappa_4>0$, $\widehat\kappa_1=\widehat\kappa_3=\widehat\kappa_5=\widehat\kappa_6=0$,
\item $\widehat\kappa_6>0$, $\widehat\kappa_1=\widehat\kappa_3=\widehat\kappa_5=\widehat\kappa_4=0$,
\item $\widehat\kappa_4>0$, $\widehat\kappa_6>0$, $\widehat\k_2>0$, $\widehat\kappa_1=\widehat\kappa_3=\widehat\kappa_5=0$.
\end{itemize}
This gives cases 1, 2 and 3 of \cite[Section 3.4.2]{rosenzweig}. 
In all cases $b_0=0$, hence $w=0$, and we obtain a Tikohnov-Fenichel reduction which agrees with the QSS reduction.

By our approach we could not identify case 4 of the 11 cases listed in \cite[Section 3.4.2] {rosenzweig} corresponding to setting all reaction rate constants to zero except $\widehat\kappa_2>0$. For an explanation, note that this case amounts to a QSS reduction with quasi-steady state species $B$.

%%%%%%%%%%%%%%%%%%%%%%%%%%%%%%%%%%%%%%%%
\subsection{A two substrate mechanism}

We consider a mechanism that consists of  two substrates $A,B$ that are converted into two products $P,Q$ through a series of reactions catalysed by an enzyme $E$; see Cornish-Bowden  \cite[Chapter 5]{enz-kinetics}. It is an example of a \emph{bi-bi} mechanism in the notation of Cleland \cite{FH07}.
\begin{align*}
E +A&  \ce{<=>[\kappa_1][\kappa_2]} \! EA  & EA+B &\ce{<=>[\kappa_3][\kappa_4]} \! EAB  \ce{<=>[\kappa_5][\kappa_6]} \! EPQ \ce{<=>[\kappa_7][\kappa_8]} \! EQ+P & EQ &\ce{<=>[\kappa_9][\kappa_{10}]} \! E+Q. 
\end{align*}
Here, the complexes $EA, EAB, EPQ, EQ$ are seen as intermediate or transient complexes in the transformation of $A,B$ into $P,Q$.

We discuss here just one set of non-interacting species, namely $\mZ=\{Z_1,Z_2,Z_3,Z_4,Z_5\}$, where  $Z_1=E$, $Z_2=EA$, $Z_3=EAB$, $Z_4=EPQ$ and $Z_5=EQ$ are all species involving the enzyme $E$.  There is a single linear first integral relating only species in $\mZ$, which is $\lambda(z)=z_1+z_2+z_3+z_4+z_5$ ($=\alpha$). We further assume mass-action kinetics.

Let  $(\kappa,\alpha)=(\widehat\kappa,\widehat\alpha)+\epsilon(\kappa^*,\alpha^*)+\ldots$ be a curve in the joint parameter space with $\widehat\kappa\in\R_{\ge 0}^{10}$ and $\widehat\alpha\in\R_{\ge 0}$. The non-interacting graph $\mG$ for $\widehat\kappa$ is  
\begin{center}
%\resizebox {0.7\linewidth}{!}
{
\begin{tikzpicture}
\node (*) at (-2,-2) {$*$};%
\node (E) at (0,-2) {$Z_1$};%{$E$};
\node (EA) at (2,-2) {$Z_2$};%{$EA$};
\node (EAB) at (4,-2){$Z_3$}; %{$EAB$};
\node (EPQ) at (6,-2){$Z_4$}; %{$EPQ$};
\node (EQ) at (8,-2){$Z_5$.}; %{$EQ$};
\draw[->] (EA) to[out=10,in=170] node[above,sloped] {\footnotesize $\widehat\kappa_3x_B$} (EAB);
\draw[->] (EAB) to[out=190,in=-10] node[below,sloped] {\footnotesize $\widehat\kappa_4$} (EA);
\draw[->] (EAB) to[out=10,in=170] node[above,sloped] {\footnotesize $\widehat\kappa_5$} (EPQ);
\draw[->] (EPQ) to[out=190,in=-10] node[below,sloped] {\footnotesize $\widehat\kappa_6$} (EAB);
\draw[->] (EPQ) to[out=10,in=170] node[above,sloped] {\footnotesize $\widehat\kappa_7$} (EQ);
\draw[->] (EQ) to[out=190,in=-10] node[below,sloped] {\footnotesize $\widehat\kappa_8x_P$} (EPQ);
\draw[->] (E) to[out=10,in=170] node[above,sloped] {\footnotesize $\widehat\kappa_1x_A$} (EA);
\draw[->] (EA) to[out=190,in=-10] node[below,sloped] {\footnotesize $\widehat\kappa_2$} (E);
\draw[->] (EQ) to[out=230,in=310] node[below,sloped] {\footnotesize $\widehat\kappa_9$} (E);
\draw[->] (E) to[out=320,in=220] node[above,sloped] {\footnotesize $\widehat\kappa_{10}x_Q$} (EQ);
\end{tikzpicture}
}
\end{center}
The set $\Sigma$ has two cycles: The cycle with the edges with labels $\widehat\kappa_1$, $\widehat\kappa_3$, $\widehat\kappa_5$, $\widehat\kappa_7$ and $\widehat\kappa_9$  that meets all nodes clockwise, and the cycle with the edges with the rest of the labels that meets all nodes counter-clockwise, $\widehat\kappa_2$, $\widehat\kappa_4$, $\widehat\kappa_6$, $\widehat\kappa_8$ and $\widehat\kappa_{10}$.  In both cases $\Gamma(\sigma)$ is the cycle itself and the labels are respectively 
\[\widehat\kappa_1\widehat\kappa_3\widehat\kappa_5\widehat\kappa_7\widehat\kappa_9x_Ax_B\qquad \textrm{and}\qquad \widehat\kappa_2\widehat\kappa_4\widehat\kappa_6\widehat\kappa_8\widehat\kappa_{10}x_Px_Q,\]
and furthermore one has $I=\emptyset$. Blanket condition (i) is satisfied if there is a spanning tree with positive labels of the connected component with nodes $\mZ$, see Lemma \ref{lem:spanning}. The latter can be achieved in various ways:
\begin{itemize}
\item $\widehat\kappa_2>0$, $\widehat\kappa_4>0$, $\widehat\kappa_6>0$, $\widehat\kappa_8>0$,
\item $\widehat\kappa_2>0$, $\widehat\kappa_4>0$, $\widehat\kappa_6>0$, $\widehat\kappa_9>0$,
\item $\widehat\kappa_2>0$, $\widehat\kappa_4>0$, $\widehat\kappa_7>0$, $\widehat\kappa_9>0$,
\item $\widehat\kappa_2>0$, $\widehat\kappa_5>0$, $\widehat\kappa_7>0$, $\widehat\kappa_9>0$,
\item $\widehat\kappa_3>0$, $\widehat\kappa_5>0$, $\widehat\kappa_7>0$, $\widehat\kappa_9>0$,
\end{itemize}
assuming $Z_1$ to be the root, and similarly if any other node is the root. This gives $25$ different cases.
Blanket condition (ii) holds for all positive $x_A,x_B,x_P,x_Q$ if and only if $\widehat\kappa_1\widehat\kappa_3\widehat\kappa_5\widehat\kappa_7\widehat\kappa_9=0$ and $\widehat\kappa_2\widehat\kappa_4\widehat\kappa_6\widehat\kappa_8\widehat\kappa_{10}=0$, or $\widehat{\alpha}=0$. 

We have (with $z_3$ eliminated using the linear first integral) 
$$a_0(x)=\begin{pmatrix} 0 \\ \widehat\kappa_4\widehat\alpha \\ 0 \\ 0 \end{pmatrix},\quad b_0(x)=\begin{pmatrix} 0 \\ \widehat\kappa_4\widehat\alpha \\ \widehat\kappa_5\widehat\alpha \\ 0 \end{pmatrix}.$$
In particular, if $\widehat\alpha=0$ or $\widehat\kappa_4=\widehat\kappa_5=0$, then $a_0(x)=b_0(x)=0$ and a Tikhonov-Fenichel reduction exists and agrees with the QSS reduction.   By symmetry of the reactions, the same holds if  $\widehat\kappa_6=\widehat\kappa_7=0$.

%%%%%%%%%%%%%%%%%%%%%%%%%%%%%%%%%%%%%%%%%%%%%%%%%%%%%%%%%
\subsection{Post-translational modification systems}
\paragraph{Generalities.}
We will consider here a generalization of the Michaelis Menten system in Section \ref{sec:MM} known as post-translational modification (PTM) systems \cite{fwptm}. Mass-action kinetics is assumed throughout.

A PTM system consists of reactions of the form
$$S_i+S_j\ce{<=>[a_{i,j}^\ell][b^\ell_{i,j}]} C_\ell,\quad C_\ell \ce{<=>[c_{\ell,k}][c_{k,\ell}]} C_k,\quad S_i \ce{<=>[d_{i,j}][d_{j,i}]} S_j,$$
for $i,j,\ell,m$ varying in some index sets and  $a^\ell_{i,j},b^\ell_{i,j},c_{\ell,k},d_{i,j}\ge 0$. (Recall that a reaction rate constant is  allowed to be zero  in which case the corresponding reaction does not take place.) The species $S_i$ are known as substrates and the species $C_\ell$ as intermediates. The Michaelis-Menten system discussed earlier is one example of a PTM system (in which the enzyme also plays the role of a substrate).  PTM systems are found in abundance in biological organisms and PTM is considered a general mechanism for signal transmission \cite{fwptm}. The class of PTM systems also includes the MAPK cascade, a layered network of reactions in which a signal is filtered. These systems play pivotal roles in the modelling of cancers and have been studied extensively in the literature, experimentally as well as mathematically.

We will assume that all intermediate species  are degraded in the sense that for any $C_\ell$  there exists a sequence of reactions (with positive rate constants) such that
$$C_\ell \ce{->[c_{\ell,\ell_1}]}\quad \ldots\quad \ce{->[c_{\ell_{k-1},\ell_{k}}]} C_{\ell_k}\ce{->[b^{\ell_k}_{i,j}]} S_i+S_j.$$

Next, we will study some generic cases of non-interacting species sets in the light of Lemma \ref{lem:spanning} and Lemma \ref{lem:cycle}. Let $\kappa$ denote the vector of parameters and assume $\kappa=\widehat\kappa+\epsilon\kappa^*+\ldots$ is a curve in parameter space with $\widehat\kappa\in\R^m_{\ge 0}$.
The case where $\mathcal{Z}$ only consists of intermediate species was discussed in Remark~\ref{rk:intermediates}.

We  consider a  generalization of   the standard Michaelis-Menten reduction by enzyme and  substrate. For this, 
assume furthermore that
\begin{itemize}
\item All intermediate species are produced and degraded that is, for $C\in\mC$ (in the set of intermediate species) there is a sequence of reactions such that
$$S_i+S_j\ce{->[]}\ldots\ce{->[]}C\ce{->[]}\ldots\ce{->[]} S_k+S_\ell$$
\item $\mZ=\mC\cup\mS$, where $\mS=\{S_{K+1},\ldots,S_M\}$ (potentially after relabelling) is a subset of  the substrate species. Furthermore, assume the graph $\mG_\k$ has two components, $\mG_\k^0$ with $\mN^0=\{*\}$ and $\mG_\k^1$. Hence there is a linear first integral
$$\sum_{i\colon C_i\in\mC} z_i+\sum_{i\colon S_i\in \mS} z_i$$
relating the non-interacting species, and if there is a non-interacting species in the reactant (product) of a reaction, then there is one in the product (reactant) of the same reaction.
\end{itemize}

Assume that blanket conditions (i) and (ii) are satisfied, for example by choosing rate constants or conserved amounts such that  Lemma \ref{lem:spanning} and Lemma \ref{lem:cycle} are applicable. Note that any cycle in $\Sigma$  contains both substrates and intermediates. Indeed, a cycle of $\mG_\k$ involving only substrates (resp.\ intermediates) corresponds to a reaction path with reactions of the form $S_i\ce{->[]}S_j$ (resp.\ $C_i\ce{->[]}C_j$), hence the net production of species that are not non-interacting is zero and the cycle is not in $\Sigma$.

Since $\mG^0_\k$ contains only the node $*$, one has $b_0(x)=0$ and a Tikhonov-Fenichel reduction is exists and agrees with the QSS reduction. Furthermore, it takes the form in Proposition~\ref{zeezeroprop}, where
 $a_1(x),b_1(x)$, $A_0(x)$, $B_0(x)$ are all linear in $x$, hence the right hand side of the ODE system is a rational function $p(x)/q(x)$ in $x$, with $p(x),q(x)$ irreducible polynomials in $x$.
  It  follows from \cite{fwptm} that the monomials of $p(x)$ and $q(x)$ only depend on the reactions $S_i\ce{->[]}S_j$ and whether $S_i+S_j$ is connected by a reaction path to 
$S_k+S_\ell$ or not, and not on the chain of intermediate species connecting them nor the structure of the intermediate network as such.

%%%%%%%%%%%%%%%%%%%%%%%%%%%%%%%%%%%%%%%%%%%%%%%%%%%%%%%%
\paragraph{A class of PTM systems.} As an example, we consider a modified Michaelis-Menten system with enzyme $E$, substrate $S$, product $P$ and an arbitrary number $C_1,\ldots, C_m$ of intermediate complexes (that is, $E, S,P$ are ``substrates''  in the terminology of the first part). The reactions are
\[
E+S\ce{<=>[\k_1][\k_{-1}]} C_1, \quad C_i\ce{<=>[\gamma_{ij}][\gamma_{ji}]} C_j, \quad C_m\ce{->[\k_2]} E+P,
\]
and we assume that there is a reaction path from $C_1$ to $C_m$, and mass-action kinetics. 
We consider the non-interacting set $\mathcal{Z}=\{ E,C_1,\dots,C_m\}$ with the linear first integral $\alpha=x_E+ x_{C_1}+\dots+x_{C_m}$.
We have $b_0=0$ as the component of $*$ only contains one node.
The graph $\mG_\k$ is of the form
\begin{center}
\begin{tikzpicture}[inner sep=1.2pt]
\draw[dashed,fill=gray!20!white] (3.4,0) circle (40pt);
\node (E) at (0,0) {$E$};
\node (C1) at (3,1) {$C_1$};
\node (Cm) at (3,-1) {$C_m$};
\node (Ci) at (4,0) {$C_i$};
\draw[->] (E) to[out=45,in=180]  node[above,sloped]{\footnotesize $\kappa_1x_S$} (C1);
\draw[->] (C1) to[out=210,in=0]  node[above,sloped]{\footnotesize $\kappa_2$} (E);
\draw[->] (Cm) to[out=180,in=-45]  node[above,sloped]{\footnotesize $\kappa_3$} (E);
\end{tikzpicture}
\end{center}
where the dashed circle represents the edges among the intermediate species and with labels $\gamma_*$. Note that all edges of $\mG_\k$ have a label that is constant in $x_S$, except the edge $E\rightarrow C_1$ has label $\k_1x_S$. 

Blanket condition (i) is satisfied if the graph $\mG_\k$ contains at least one spanning tree. 
All cycles in $\Sigma$ must involve both $E$ and intermediates, and since the net production of either $S$ or $P$ needs to be non-zero, we are left with cycles containing the edges with labels $\k_1x_S$ and $\k_2$. 

Hence, in view of Lemma~\ref{lem:cycle}, assuming the existence of the spanning tree with positive label, $\widehat\alpha=0$ or $\widehat\k_1=0$ or $\widehat\k_3=0$ guarantee that blanket condition (ii) holds. 
By Corollary~\ref{cor:QSS}, $\widehat\alpha=0$ implies $w=0$. When $\widehat\k_1=0$, $w$ is constant as $v_1(x,\widehat\k)=0$ and $K_{2}(x,\k)$ becomes constant in $x$. 
It follows that either $\widehat\alpha=0$ or $\widehat\k_1=0$ give choices of TFPV, the Tikhonov-Fenichel reduction exists and agrees with QSS.

We now find the reduced system in $x_S$, by using the expression in \eqref{cqssredsys} and then evaluating at the TFPV. 
We have
\[
\dot{x}_S = -\k_1 x_E x_S + \k_2 x_{C_1} .
\]
We assume the graph $\mG_\k$ is strongly connected. Then, by \cite{Fel_elim}, see also \cite{saez_reduction,feliu:intermediates}, 
the solution to $\dot{x}_{C_i}=0$, $i=1,\dots,m$, together with the linear first integral is of the form
\[ x_E = \frac{\alpha (\k_2 \theta_1 + \k_3 \theta_m)}{\k_1\delta x_S + \k_2 \theta_1 + \k_3 \theta_m},\qquad x_{C_i} = \frac{\alpha \k_1 \theta_i x_S}{\k_1\delta x_S + \k_2 \theta_1 + \k_3 \theta_m},\]
where 
 $\theta_i $ is the sum of the labels of all spanning trees of the subgraph of $\mG_\k$ delimited by the dashed circle in the figure above rooted at $C_i$, and depends only on $\gamma_*$; and  $\delta$ is the sum $\theta_1+\dots+\theta_m$.
Then $\k_2 \theta_1 + \k_3 \theta_m$ is the sum of the labels of all spanning trees of $\mG_\k$ rooted at $E$ and $\k_1\theta_i x_S$  is the sum of the labels of all spanning trees of $\mG_\k$ rooted at $C_i$. The denominator is the sum of all possible spanning trees of $\mG_\k$. 
Plugging these expressions into $\dot{x}_S$, we obtain
\[ 
\dot{x}_S = -\k_1  x_S  \frac{\alpha (\k_2 \theta_1 + \k_3 \theta_m) }{\k_1\delta x_S + \k_2 \theta_1 + \k_3 \theta_m} + \k_2 \frac{\alpha \k_1 \theta_1 x_S}{\k_1\delta x_S + \k_2 \theta_1 + \k_3 \theta_m} = \frac{ - \k_1 \k_3 \alpha  \theta_m x_S}{\k_1\delta x_S + \k_2 \theta_1 + \k_3 \theta_m} .
\] 
This is the QSS reduction of the system, which agrees with the Tikhonov-Fenichel reduction if  either $\widehat\alpha=0$ or $\widehat\k_1=0$, provided $\mG_{\widehat{\k}}$ has a rooted spanning tree with positive label. Remarkably, the basic form of the reduced equation (the right hand side being a quotient of two degree one polynomials) does not depend on the number of intermediates nor on specifics of their interactions, and is identical with the form of the standard Michaelis-Menten equation.

We now look at the specific cases. For $\widehat\alpha=0$, we consider the curve $\epsilon \alpha^*$ in parameter space, which gives in slow time 
\[ 
\frac{d x_S}{d\tau} =  \frac{ - \widehat\k_1 \widehat\k_3 \alpha^*  \widehat\theta_m x_S}{\widehat\k_1\widehat\delta x_S + \widehat\k_2 \widehat\theta_1 +\widehat \k_3\widehat \theta_m} .
\] 

For $\widehat\k_1=0$, we consider $\epsilon\k_1^*$, which gives in slow time
\[ 
\frac{d x_S}{d\tau}= \frac{ -\k_1^*  \widehat\k_3 \widehat\alpha  \widehat\theta_m x_S}{\epsilon\k_1^* \widehat\delta x_S + \widehat\k_2\widehat\theta_1 + \widehat\k_3 \widehat\theta_m} = \frac{ -\k_1^*  \widehat\k_3 \widehat\alpha  \widehat\theta_m }{\widehat\k_2\widehat\theta_1 + \widehat\k_3 \widehat\theta_m}x_S+\epsilon(\ldots).
\]

%%%%%%%%%%%%%%%%%%%%%%%%%%%%%%%%%%%%%%%%%%%%%%%%%%%%%%%%%%%%%%%%%
\section*{Appendix: A brief outline of singular perturbation reduction}
Here we give a brief  informal outline on singular perturbation reduction according to Tikhonov\cite{tikh} and Fenichel \cite{fenichel}. For more details see the monograph by Verhulst \cite{verhulst}, Chapter 8, and \cite{gw2} for the coordinate-independent version. All functions and vector fields in the following are assumed to be sufficiently differentiable.
\begin{enumerate}
\item Consider a system with small parameter $\varepsilon$ in {\em standard form}
\[
\begin{array}{rccl}
 \dot x_1 &=  f_1(x_1,\,x_2) + \varepsilon \;(\dotsc),\quad &\quad &x_1 \in D\subseteq \mathbb R^r,\\
 \dot x_2 &=  \varepsilon f_2(x_1,\,x_2) + \varepsilon^2 \;(\dotsc),\quad &\quad &x_2 \in G \subseteq\mathbb R^s.
\end{array}
\]
Rewritten in {\em slow time} $\tau=\varepsilon t$ one obtains
\[
 \varepsilon  x_1^\prime= f_1(x_1,\,x_2) + \cdots,\quad
  x_2^\prime =f_2(x_1,\,x_2) + \cdots.
\]
Given that
\begin{itemize}
\item there is a non-empty {\em critical manifold}
\[
\widetilde Z :=\left\{ (y_1,\,y_2)^T\in D\times G ;\, f_1(y_1,\,y_2) =0\right\};
\]
\item there exists $\nu>0$ such that all eigenvalues of $ D_1f_1(y_1,\,y_2)$, $(y_1,y_2)\in\widetilde Z$ have real part $\leq -\nu$,
\end{itemize}
then by 
{\bf Tikhonov's  Theorem} there exist $T>0$ and a neighborhood of $\widetilde Z$ in which, as $\varepsilon \to 0$, all solutions converge uniformly to solutions of
\[
 x_2^\prime = f_2(x_1,\,x_2),\quad f_1(x_1,\,x_2)=0 \quad \text{on  }\left[ t_0,\,T\right] 
\]
with $t_0>0$ arbitrary.
\item
More generally, a system may be put into standard form (and then admit a singular perturbation reduction) by a coordinate transformation. Thus we start with a parameter dependent equation
\[
 \dot x = h^{(0)}(x) + \varepsilon h^{(1)}(x) +\varepsilon^2 \dotso
\]
and assume that $Z:=\{x;\,h^{(0)}(x)=0\}$ has dimension $s>0$. This system
admits a coordinate transformation into standard form and subsequent Tikhonov-Fenichel reduction near every point of $Z$ if and only if
\begin{enumerate}[(i)]
\item  ${\rm rank\ } Dh^{(0)}(x)=r:=n-s$ for all $x\in Z$,
\item for each $x\in Z$ there exists a direct sum decomposition
$\mathbb R^n = {\rm Ker\ } Dh^{(0)}(x) \oplus  {\rm Im\ } Dh^{(0)}(x)$,
\item for each $x\in Z$ the non-zero eigenvalues of $Dh^{(0)}(x)$ have real parts smaller than $ -\nu<0$. 
\end{enumerate}
\item
The remaining problem is that an explicit computation of the coordinate transformation is generally impossible. This can be circumvented by the following coordinate-free reduction procedure, which we state for the system
\[
 x^\prime =  \varepsilon^{-1}h^{(0)}(x) + h^{(1)}(x) +\dotso
\]
in slow time. We assume that $Z\subseteq\mathcal{V}(h^{(0)})$, the vanishing set of $h^{(0)}$, satisfies conditions (i), (ii) und (iii), and let $a\in Z$.

\smallskip
{\bf Decomposition:} There is an open neighborhood $U_a$ 
of \(a\) such that
\[
h^{(0)}(x) = P(x) \mu(x), 
\]
with \(\mu(x)$ having values in $\mathbb R^{r}$, $P(x)$ having values in $ \mathbb R^{n\times r}$, \({\rm rank}\ P(a) = r\), ${\rm rank}\ D\mu(a)=r$, and (w.l.o.g.)
$\mathcal{V}(h^{(0)}) \cap U_a = \mathcal{V}(\mu) \cap U_a=Z$. (This is a consequence of the implicit function theorem for the differentiable case. When $h^{(0)}$ is rational then $P$ and $\mu$ can be chosen rational, and $U_a$ is Zariski-open.

\smallskip
{\bf  Reduction:} The system
\[
 x' =  \left [I_n - P(x) A(x)^{-1} D\mu(x)\right] h^{(1)}(x), \quad \text{with  }A(x):= D\mu(x) P(x)
 \]
is defined on $U_a$ and admits $Z$ as invariant set. The restriction to $Z$ corresponds to the reduction from Tikhonov's theorem.
\end{enumerate}
\bigskip
\bigskip

\noindent{\bf Acknowledgements.} CL acknowledges support by the DFG Research Training Group GRK 1632 ``Experimental and Constructive Algebra''.   SW acknowledges support  by the bilateral project ANR-17-CE40-0036 and DFG-391322026 SYMBIONT. EF and CW acknowledge support from the Independent Research Fund of Denmark. Moreover EF, SW and CW thank the Erwin-Schr\"odinger-Institute (Vienna) for the opportunity to participate, in October 2018, in the workshop ``Advances in Chemical Reaction Network Theory'', during which a substantial part of the present paper was written. 

%\bibliographystyle{plain}
%\bibliography{crnt}

\end{document}